\numberwithin{equation}{section}
\newcommand\xappa\kappa
\newcommand\yota\iota
\newcounter{consta}
\newcounter{constb}
\newcounter{constc}[section]
\newcommand{\vol}{\operatorname{vol}}
\newcommand{\R}{\mathbb{R}}
\newcommand{\Z}{\mathbb{Z}}
\DeclareFontFamily{OT1}{rsfs}{}
\DeclareFontShape{OT1}{rsfs}{n}{it}{<-> rsfs10}{}
\DeclareMathAlphabet{\mathscr}{OT1}{rsfs}{n}{it}
\newtheorem{thm}{Theorem}[section]
\newtheorem{lem}[thm]{Lemma}
\newtheorem{prop}[thm]{Proposition}
\newtheorem*{lem*}{Lemma}
\newtheorem*{thm*}{Theorem}
\newtheorem*{conj*}{Conjecture}
\newtheorem*{prop*}{Proposition}
\newtheorem{defn*}{Definition}
\newtheorem{ex}[thm]{Example}
\newtheorem{cor}[thm]{Corollary}
\theoremstyle{definition}
\newtheorem{defn}[thm]{Definition}
\theoremstyle{remark}
\newtheorem{rem}[thm]{Remark}
\newtheorem*{obs*}{Observation}
\newtheorem*{rem*}{Remark}
\theoremstyle{definition}\newtheorem*{acknowledgments}{Acknowledgments}
\begin{document}

\title[Margulis' inequality and equidistribution]{Margulis' Inequality for translates of horospherical orbits and applications to equidistribution}
\begin{abstract}
In this paper we develope a quantitative non-divergence theorem for translates of horospherical orbits, using the  technique of Margulis' inequality as developed by Eskin-Margulis-Mozes and Eskin-Margulis. 
As we use the Margulis' inequality, our results do not depend on the spectral gap of the action.
As an application of our techniques, we show that given a horospherical flow over the space of lattices, the horospherical orbit of every lattice defined over a number field, not contained in a proper rational parabolic subgroup is equidistributed with an effective rate.
\end{abstract}

\author{Asaf Katz}
\address{Department of Mathematics, University of Michigan, Ann Arbor, Michigan 48109, USA}
\email{asaf.katz@gmail.com}

\maketitle

\section{Introduction}

Let $G$ be  non-compact semi-simple Lie group, equipped with a probability distribution $\mu$.
Let $X$ be a $G$-space.
We may define the averaging operator $A_\mu$ over $L^{\infty}(X)$ as
\begin{equation*}
    A_{\mu}f(x)=\int_{G}f(g.x)d\mu(g).
\end{equation*}
\begin{defn}
We will say that the action $G\curvearrowright X$ satisfy a \emph{Margulis' inequality} if there exists some \emph{proper} function $f:X\to \mathbb{R}$ and constants $0\leq \alpha< 1$ and $\beta\in\mathbb{R}$ so that
\begin{equation*}
    A_{\mu}f(x) \leq \alpha\cdot f(x)+\beta,
\end{equation*}
for all $x\in X$.
\end{defn}
Margulis' inequalities were introduced by A. Eskin, G. Margulis and S. Mozes in~\cite{eskin-margulis-mozes} and generalized in~\cite{eskin-margulis,benoist-quint} in order to prove various non-divergence theorems.
The purpose of the paper is to introduce a new kind of Margulis' inequality, which is related to the one in~\cite{eskin-margulis-mozes} in order to achieve refined quantitative equidistribution statement for horospherical averages.

\begin{defn}
A subgroup $H\leq G$ is called horospherical subgroup with respect
to a diagonalizable element $a\in G$ if 
\[
H=\left\{ g\in G\mid a^{n}ga^{-n}\to e,\ n\to-\infty\right\} .
\]

It is easy to check that horospherical subgroups are nilpotent, connected,
simply-connected and consist of Ad-unipotent elements.

The main example for such subgroups is the upper unipotent matrices
in $SL_{2}\left(\mathbb{R}\right)$ with $a$ being a completely regular
diagonal matrix $a\in SL_{2}\left(\mathbb{R}\right)$ with diagonal entries satisfying $a_{1,1}>a_{2,2}$.
\end{defn}

Given a $G$-space $X$, define the convolution operator associated
to the $a_{\log R}$-translate of $H$ as:

\begin{equation*}
A_{R}f\left(x\right)=\int_{h\in B_{1}^{H}}f\left(a_{\log R}.h.x\right)d\mu_{H}\left(h\right),
\end{equation*}
 for every $f\in C_{b}\left(X\right).$
\begin{defn}

We say that the convolution operator $A_{R}$ has the uniform non-divergence
property if for all $\varepsilon>0$ there exists some compact subset
$K=K\left(\varepsilon\right)$ of $G/\Gamma$ such that for every
$x\in G/\Gamma$ there exists $M=M\left(x,\varepsilon\right)$ such
that 
\begin{equation}
\left(A_{R}^{\left(M\right)}\star\delta_{x}\right)\left(K\right)>1-\varepsilon.\label{eq:non-divergence}
\end{equation}
\end{defn}

A famous theorem of Dani-Margulis~\cite[Theorem~$1$]{dani-margulis-uni} shows that the $H$-average $$B_{R}f\left(x\right)=\frac{1}{\vol_{H}\left(B_{R}^{H}\right)}\int_{h\in B_{R}^{H}}f\left(h.x\right)d\mu_{H}\left(h\right)$$
enjoys the uniform non-divergence property except for points $x\in G/\Gamma$
such that $\overline{H.x}\neq X$ (and actually, a more precise statement
of the above, see for example~\cite[Theorems $1,2$]{dani-margulis-uni}).

The proof uses the polynomial behavior of the horospherical flow.

A quantitative statement was proven by Kleinbock-Margulis~\cite[Theorem~$5.2,5.3$]{kleinbock-margulis}. The main
theorem of Kleinbock-Margulis shows individual non-divergence, in
the sense that the compact subset $K$ given \emph{does depend} on
the origin point $x$ as well\footnote{this amounts to the $\rho$ estimate
in their Theorem.}. Such an alternative is inevitable, as the point
may lie in a ''small period'' up in the cusp and one may think of their parameter
$\rho$ as being some ''anchoring scale'' which Kleinbock-Margulis
choose to be related to the height of the origin point $x$ in $G/\Gamma$. 

The problem arises when $x$ is ''very Diophantine'' namely $H.x=X$
but ''locally in $H$'', $B_{R}^{H}.x$ is high in the cusp (for example,
being well-approximated by a periodic orbit of small orbit).

Nevertheless, for the question of equdistribution of horospherical
orbits, one is interested not only in the $H$-action, but in the
translates of $H$-orbits as represented in the $A_{R}$-averaging
operators, which give more tools to handle the non-divergence issues,
as the periodic orbits themselves are getting equidistributed under
the $a$-flow.

The question of considering translates of averages was risen in the
paper of Eskin-Margulis-Mozes~\cite{eskin-margulis-mozes} regarding certain quantitative estimates
associated with the Oppenheim conjecture, where they devised a technique
which is now termed ''Margulis' inequality'' in order to prove the
uniform non-divergence property, and generalizations of this technique
to the case of random walks generated by semisimple Lie subgroups
over homogeneous spaces have been given by Eskin-Margulis~\cite{eskin-margulis} and Benoist-Quint~\cite{benoist-quint}.

The main theorem of the paper proves a Margulis' inequality
for the $A_{R}$-operator.
For simplicity we just state here the results for the case of $G=SL_{n}(\mathbb{R}), \Gamma = SL_{n}(\mathbb{Z})$ and $X=G/\Gamma$. The general case is treated later in the text.
\begin{thm}
Let $H\leq G$ be a horospherical subgroup of $G$, with respect to a one-parameter group $A=\langle a\rangle\leq G$.
Then for all $R>1$ the averaging operator defined by $a_{\log R}.m_{B_{1}^{H}}$ satisfies a Margulis' inequality. Namely there exists a proper function $f:X\to \mathbb{R}$ such that for any $R\gg 1$, there exists $\alpha=\alpha(r)<1$ and $\beta=\beta(r)\in \mathbb{R}$ satisfying
\begin{equation}
    \int_{h\in B_{1}^{H}}f(a_{\log{R}}.h.x)dh \leq \alpha\cdot f(x)+\beta,
\end{equation}
for all $x\in X$.
\end{thm}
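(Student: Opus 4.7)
My plan is to follow the Eskin--Margulis--Mozes strategy, with the convolution operator $A_R$ replacing their random walk averages. I will define a proper height function $f:X\to\mathbb{R}$ as a weighted, truncated sum of reciprocal norms of short primitive integral wedges, then verify the Margulis inequality by reducing it to a single-wedge estimate that combines the Kleinbock--Margulis $(C,\alpha)$-good property with the expansion of $a_{\log R}$ along the horospherical directions.

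Concretely, for $x=g\Gamma\in X$ with $\Lambda_x=g\mathbb{Z}^n$, I set
\[
  f(x) \;=\; \sum_{k=1}^{n-1} c_k \sum_{v\in(\wedge^k\Lambda_x)_{\mathrm{prim}}} \omega(\|v\|)\,\|v\|^{-s},
\]
with small $s>0$, positive weights $c_k$, and a smooth cutoff $\omega$ supported on $\|v\|\leq T$. This is a finite sum at each $x$, and by Mahler's compactness criterion $f$ is proper. The central analytic estimate will be: for each primitive wedge $v\in\wedge^k\Lambda_x$ with $1\leq k\leq n-1$,
\[
  \int_{B_1^H} \|a_{\log R}\,h\,v\|^{-s}\,d\mu_H(h) \;\leq\; C_1\,R^{-s\delta}\,\|v\|^{-s},
\]
where $\delta=\min_{1\leq k\leq n-1}(\lambda_1+\cdots+\lambda_k)>0$ and $\lambda_1>\cdots>\lambda_n$ are the $a$-eigenvalues on $\mathbb{R}^n$. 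This rests on two inputs: (a) the polynomial $h\mapsto\|a_{\log R}hv\|^2$ on $H$ has degree bounded in terms of $n$, so by Kleinbock--Margulis $(C,\alpha)$-goodness, for $s$ small enough $\int|\cdot|^{-s}\leq C(\max|\cdot|)^{-s}$; (b) examining the coefficient of the top-weight basis element $e_{\{1,\ldots,k\}}$ in $a_{\log R}hv$ and letting the parameters of $h\in B_1^H$ vary yields $\max_{B_1^H}\|a_{\log R}hv\|\geq c_2\,R^{\lambda_1+\cdots+\lambda_k}\|v\|$.

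Summing the single-wedge estimate over all $v$ and using linearity of the integral, one obtains
\[
  \int_{B_1^H} f(a_{\log R}\,h\,x)\,d\mu_H(h) \;\leq\; C_1\,R^{-s\delta}\,f(x) + \beta,
\]
where $\beta$ absorbs the uniformly bounded contribution from long wedges $\|v\|>T$ which get shrunk into the support of $\omega$ by $a_{\log R}h$ (finitely many at any time, by Minkowski-type counting). For $R$ large enough, $\alpha:=C_1R^{-s\delta}<1$, and the Margulis inequality is verified.

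\textbf{Main obstacle.} The principal difficulty is the uniform lower bound (b): for an arbitrary primitive $v\in\wedge^k\mathbb{R}^n$, one must locate an $h\in B_1^H$ for which the top-weight coefficient of $hv$ is of order $\|v\|$. The easy case is when $v$ already has a nonzero top-weight component; the general case encodes the transitivity of $H$ on the open Schubert cell of the Grassmannian, and quantifying it uniformly requires a polynomial non-degeneracy estimate in the spirit of the polynomial divergence lemmas of Dani--Margulis and Kleinbock--Margulis. A secondary technical point is the careful choice of the weights $c_k$ and exponent $s$: $s$ must be small enough for $(C,\alpha)$-goodness to apply to all relevant degrees, while the $c_k$ must be tuned so that the ``flow'' of contributions between different $k$'s (as a long vector in $\wedge^{k}$ spawns short vectors in $\wedge^{k'}$ through saturation) does not disrupt the contraction factor.
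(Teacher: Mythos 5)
Your plan has the right overall architecture — reduce the Margulis inequality to an integral estimate on each exterior power $\bigwedge^k\mathbb{R}^n$, combine the polynomial sub-level bound (Remez / Kleinbock--Margulis $(C,\alpha)$-goodness) with an expansion lower bound, and assemble the height function out of reciprocal powers of wedge norms. This matches the paper's route, which replaces your explicit truncated-sum height function by a citation to the Eskin--Margulis construction (their Proposition~2.6 and \S3), so that the whole problem is reduced to the single-vector inequality $\int_{B_1^H}\|a_{\log R}hv\|^{-\delta}\,dh\le c\|v\|^{-\delta}$ with $c<1$.

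However, the step you flag as the ``main obstacle'' is in fact the crux, and you have not closed it — and you have also set yourself a harder task than necessary. You aim to show that for \emph{every} primitive $v$ the top-weight coefficient of $hv$ reaches size $\gg\|v\|$ over $h\in B_1^H$, which (as you note) would require a quantitative transitivity statement on the open Schubert cell. The paper avoids this: its Anchor Lemma only asserts that for every $v\neq 0$ \emph{some} positive-weight coefficient polynomial $f_{\beta,j}(\bar t)$ is not identically zero on $[0,1]^{\dim H}$. The proof is by contradiction using irreducibility of the representation: if all positive-weight coefficients vanished identically along $H$, then $\rho(G).v$ would be trapped in $V_-\oplus V_0$ (the non-expanding part), which would be a proper $G$-invariant subspace — impossible. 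Compactness of the unit sphere $\{\|v\|=1\}$ then upgrades this to a uniform lower bound $M_\rho>0$ on $\max_{\beta\in\Phi^+,j}\sup_{\bar t}|f_{\beta,j}(\bar t)|$. Hitting \emph{any} positive weight $\beta$ (not necessarily the top one) already gives an expansion factor $e^{\chi_a(\beta)\log R}>1$, which is enough to beat the bounded contribution from the non-expanding directions after optimizing the Remez threshold $\tau$. So the paper's argument is both weaker in its demands and actually establishes the bound you left open; your proposal, as written, has a genuine gap at exactly this point. (The paper does later prove a top-weight non-degeneracy proposition like the one you want, but only as a decay-rate optimization, not as a prerequisite for the inequality itself.)
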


Furthermore, we show some applications to homogeneous dynamics, improving
on former works of the author, regarding horospherical equdistribution and rapid recurrence of semisimple periods.
In what comes next, we treat unimodular lattices in $\R^n$ as point in the homogenous space $X=SL_{n}(\R)/SL_n(\Z)$.
\begin{defn}
A lattice $x\in SL_{n}(\R)/SL_{n}(\Z)$ is said to be \emph{diophantine} if
\begin{equation*}
    \limsup_{R\to -\infty} \frac{\log\left(\alpha_{i}(a_{-\log R}.x.\mathbb{Z}^n \right)}{\log\left(\alpha_{i}(a_{-\log R}.\mathbb{Z}^n \right)} = \omega_i <1,
\end{equation*}
for all $1 \leq i \leq n$, where $\alpha_i(y\cdot\mathbb{Z}^n)$ is the $i$'th Euclidean minima of the lattice $y\cdot\mathbb{Z}^{n}$.
\end{defn}
This definition will be given in an extended form in Definition~\ref{def:dioph}.

\begin{thm}
Assume that $x\in X$ is \emph{diophatine}. Then there exists $\gamma=\gamma(\Gamma,\omega_1,\ldots,\omega_n)>0$ and $R_0=R_0(x)>0$ such that for all $R>R_0$ and any $f:X\to \mathbb{R}$ a bounded smooth function with vanishing integral
\begin{equation}
    \left\lvert \frac{1}{\vol_{H}(B_{R}^{H})} \int_{h\in B_{R}^{H}}f(h.x)dh \right\rvert \ll_{f} R^{-\gamma}.
\end{equation}
\end{thm}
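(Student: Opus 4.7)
The plan is to combine the Margulis' inequality of the previous theorem with the diophantine hypothesis on $x$ via an iteration scheme, and then to promote the resulting non-divergence to an effective equidistribution rate. The guiding observation is that the contraction factor $\alpha=\alpha(R)<1$ in the Margulis' inequality provides geometric decay of the Margulis height under iteration of $A_R$, so iterating the one-step operator many times simulates a large horospherical average while retaining quantitative control of the mass in the cusp.

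First I would reduce $B_R^H$-ball averages to the operator $A_R$. Using that conjugation by $a_{\log R}$ expands $H$ by a factor $\asymp R$, the change of variables $h\mapsto a_{\log R}h'a_{-\log R}$ yields
\begin{equation*}
    \frac{1}{\vol_H(B_R^H)}\int_{B_R^H} f(h.x)\,dh \;=\; \int_{B_1^H} f\bigl(a_{\log R}h'\cdot(a_{-\log R}x)\bigr)\,dh' \;=\; A_R f\bigl(a_{-\log R}x\bigr),
\end{equation*}
and more generally a Fubini computation (routine in the unipotent setting) identifies the iterated operator $A_{R_0}^N$ applied at $x$ with a weighted average of $B_{R_0^N}^H$-ball averages along intermediate $a$-translates. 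The diophantine hypothesis on $x$, translated into the language of the proper Margulis height $f_0$ that underlies the Margulis' inequality, gives the control $f_0(a_{-\log R}x)\ll R^{\omega}$ where $\omega:=\max_i\omega_i<1$.

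Next I would iterate. Fix a base scale $R_0$ and write $R=R_0^N$, so that $N\asymp\log R$. Applying the Margulis' inequality $N$ times from the point $y:=a_{-\log R}x$ gives
\begin{equation*}
    \int_{(B_1^H)^N} f_0\bigl(a_{\log R_0}h_N\cdots a_{\log R_0}h_1.y\bigr)\,dh_1\cdots dh_N \;\le\; \alpha^N f_0(y)+\frac{\beta}{1-\alpha} \;\ll\; \alpha^N R^{\omega}+1.
\end{equation*}
Picking $N=c\log R$ with $c$ large enough that $\alpha^N R^{\omega}=R^{-\delta}$ for an explicit $\delta>0$ shows that, outside a tail of mass $R^{-\delta}$, the iterated distribution is supported on the fixed compact sublevel set $K_C:=\{f_0\le C\}$.

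The main obstacle will be converting this uniform non-divergence into the stated rate $R^{-\gamma}$ for smooth mean-zero $f$, without invoking the spectral gap of the $G$-action. I would attempt this via a thickening/Bowen-ball comparison: over $K_C$ one trades the $H$-integral for an integral over a short transverse $G$-ball of radius $\eta$, at the cost of a Lipschitz error of order $\eta$ controlled by a Sobolev norm of $f$; the transverse average then becomes an $a_{\log R}$-translate of a small ball in $G$ based at a point of $K_C$, and the contraction rate from the Margulis' inequality, together with a direct $a$-equidistribution estimate inside $K_C$, delivers the decay. The diophantine hypothesis ensures that the retracted orbit $\{a_{-T}x\}_{T>0}$ does not synchronise with a small-scale rational parabolic orbit, so that the Dani--Margulis density of $Hx$ becomes quantitative; optimising $\eta$ against the Lipschitz loss and the contraction rate then produces the final exponent $\gamma=\gamma(\Gamma,\omega_1,\ldots,\omega_n)$.
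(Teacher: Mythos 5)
Your opening reduction --- rewriting $\frac{1}{\vol_H(B_R^H)}\int_{B_R^H}f(h.x)\,dh$ as $\int_{B_1^H}f(a_{\log R}h\,a_{-\log R}x)\,dh$ --- is exactly the paper's starting point, and using the Margulis inequality plus the diophantine bound on the height of $a_{-\log R}x$ to control divergence into the cusp is also the right idea. But the mechanics differ: you iterate the one-step operator $N\asymp\log R$ times to get a fixed compact sublevel set $K_C$, whereas the paper performs a \emph{single} split $\log R=(1-\tau)\log R+\tau\log R$, inserts an extra convolution by $u\in B_1^H$ at cost $O_{f,\tau,H}(R^{-\tau d_H})$, and then bounds the measure of $\{h\in B_1^H : a_{\tau\log R}h\,a_{-\log R}x\notin X_{\geq\varepsilon}\}$ in one shot via Markov's inequality applied to the Margulis inequality, plugging the diophantine estimate on $\alpha_i^{-\delta}(a_{-\log R}x)$. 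Your iterative formulation would have to be converted back into a single-scale Fubini decomposition to produce a rate, because the theorem is about one average at scale $R$, not about a repeatedly convolved measure; this translation is nontrivial and is precisely what the paper's one-step splitting accomplishes cleanly.

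The genuine gap is in your closing step. You assert the non-divergence can be promoted to the rate $R^{-\gamma}$ ``without invoking the spectral gap,'' via a ``thickening/Bowen-ball comparison'' and ``a direct $a$-equidistribution estimate inside $K_C$.'' The paper does no such thing: once the intermediate points $a_{\tau\log R}h\,a_{-\log R}x$ are placed in the compact part $X_{\geq\varepsilon}$, the inner average $\int_{u\in B_1^H}f(a_{(1-\tau)\log R}u\,\cdot\,)\,du$ is controlled by the effective mean ergodic theorem plus the Lipschitz comparison, both inherited from \cite{katz}, and the mean ergodic theorem is proved by the spectral gap of $G\curvearrowright L^2_0(G/\Gamma)$. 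The paper's claim that its ``results do not depend on the spectral gap'' refers to the non-divergence/Margulis inequality machinery, not to the final equidistribution rate. You would need to name a concrete mixing mechanism to replace it; ``direct $a$-equidistribution inside $K_C$'' is not one, since effective equidistribution of $a$-translates of small transverse pieces is essentially the statement you are trying to prove, and Ratner-type arguments are not effective. As written, the final reduction from non-divergence to a decay rate is unsupported.
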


\begin{acknowledgments}The author is indebted to Alex Eskin for
suggesting the application of Margulis' inequality to the problem of horospherical
equdistribution and his invaluable insights and comments regarding
this technique. The author would also like to thank Elon Lindenstrauss, Peter Sarnak, Dmitry Kleinbock and Amir Mohammadi for valuable conversations regarding Diophatine conditions for flows over homogeneous spaces. The author also like to thank Anthony Sanchez for careful reading of the paper and providing comments that vastly improved the presentation of the paper.
\end{acknowledgments}

\section{Preliminaries}
Let $G$ be a semisimple Lie group, $\Gamma\leq G$ a \emph{non-uniform lattice} and $X=G/\Gamma$ the resulting homogeneous space equipped with the probability measure $\mu$ which is induced by the Haar measure of $G$, properly normalized.
\begin{defn}
A function $u:X \to \mathbb{R}$ is said to be a \emph{Margulis function} associated to an averaging operator $A_{R}$ if
\begin{enumerate}
    \item u is a \emph{proper function}, namely $u(x)\to \infty$ as $x\to \infty$ in $X$.
    \item There exists constants $a<1, b>0$ such that 
    \begin{equation*}
        A_{R}.u(x) \leq a\cdot u(x)+b.
    \end{equation*}
\end{enumerate}
\end{defn}

For every $x\in X$, we may relate a probability measure on $X$ bh repeatedly averaging with respect to $A_{R}$, namely we define
\begin{equation*}
    \mu_{n,x}=A_{R}^{(n)}.\delta_{x},
\end{equation*}
where $\delta_{x}$ stands for the Dirac mass measure at $x$.

Assume that a Margulis' function exists for $A_{R}$, then the following Lemma holds
\begin{lem}
For \emph{every} $x\in X$, the sequence of probability measures $\left\{\mu_{n,x} \right\}\subset Prob(X)$ is tight, namely for every $\varepsilon$ there exists a compact subset $K_{\varepsilon}$ such that for every $n$,
\begin{equation*}
    \mu_{n,x}(K_{\varepsilon})>1-\varepsilon.
\end{equation*}
\end{lem}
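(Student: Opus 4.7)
The plan is to iterate the Margulis inequality to obtain a uniform bound on the $u$-moments of the measures $\mu_{n,x}$, and then convert this moment bound into the desired tightness statement via Markov's inequality combined with the properness of $u$.

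First, I would prove by induction on $n$ that
\begin{equation*}
A_R^{(n)} u(x) \leq a^n \, u(x) + b \cdot \frac{1-a^n}{1-a} \leq u(x) + \frac{b}{1-a}
\end{equation*}
for every $n \geq 0$ and every $x \in X$. The inductive step amounts to applying the hypothesis $A_R u \leq a \cdot u + b$ to the non-negative function $A_R^{(n-1)} u$, using that $A_R$ is order-preserving on non-negative measurable functions, which is immediate from its integral definition. Integrating against $\delta_x$ and recognizing the left-hand side as $\int_X u \, d\mu_{n,x}$ yields a uniform-in-$n$ bound on the $u$-moments of $\mu_{n,x}$ by a constant depending only on $x$ and on the Margulis data $a,b$.

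Next, Markov's inequality produces
\begin{equation*}
\mu_{n,x}\bigl(\{y \in X : u(y) > M\}\bigr) \leq \frac{1}{M}\left( u(x) + \frac{b}{1-a} \right)
\end{equation*}
for every $M > 0$ and every $n \geq 0$. Since $u$ is proper, the sublevel set $K_M := \{y \in X : u(y) \leq M\}$ is a compact subset of $X$. Given $\varepsilon > 0$, I would choose $M = M(x,\varepsilon)$ large enough so that the right-hand side is smaller than $\varepsilon$, and take $K_\varepsilon := K_M$. Tightness of the sequence $\{\mu_{n,x}\}$ then follows at once.

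The main technical subtlety, rather than a true obstacle, is the interplay between the pointwise Margulis inequality and the iterated integrals: one must justify that $A_R u$ is well-defined as an extended real-valued function, and that the hypothesis may be re-applied at each iterate. Both points follow easily since $u$ is bounded below (being proper and $\mathbb{R}$-valued), so one may safely approximate by the truncations $\min(u, N)$ and pass to the limit by monotone convergence if desired. Beyond this bookkeeping, the argument is entirely standard and parallels the classical passage from a Lyapunov/drift inequality to tightness for a Markov chain.
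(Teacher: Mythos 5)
Your proof is correct and follows essentially the same strategy as the paper: iterate the Margulis inequality to bound $A_R^{(n)} u(x)$, then convert via Markov's inequality and properness of $u$. You are in fact slightly more careful than the paper about uniformity in $n$: you use the bound $A_R^{(n)}u(x) \le a^n u(x) + b(1-a^n)/(1-a) \le u(x) + b/(1-a)$, which holds for \emph{every} $n$, and let $K_\varepsilon$ depend on $x$ (which the quantifier order in the lemma permits), whereas the paper passes to the cleaner bound $2b/(1-a)$ that is only valid once $n$ is large enough relative to $x$ and leaves the finitely many initial $n$ implicit.
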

The proof was given in~\cite{eskin-margulis} and we reproduce it here for the sake of completeness.
\begin{proof}
Repeatedly applying the averaging operator, combined with the Margulis' function's properties yields
\begin{equation*}
\begin{split}
    A_{R}^{(n)}.u(x) &\leq a\cdot A_{R}^{(n-1)}u(x)+b \\
    &\leq a^{n}u(x)+\sum_{i=0}^{n-1}a^{i}\cdot b \\
    &\leq a^{n}u(x) +b\sum_{i=0}^{\infty}a^{i} \\
    &= a^{n}u(x)+\frac{b}{1-a}.
\end{split}
\end{equation*}
For $n$ large enough (depending on $x$, we have that $A_{R}^{n}.u(x)\leq \frac{2b}{1-a}$.
As $u$ is proper function, it means that $K_{\varepsilon}={x\in X \mid u(x)\leq \frac{2b}{(1-a)\varepsilon}}\subset X$ is relatively compact.
Now we have by Markov's inequality
\begin{equation*}
    \mu_{n,x}(K_{\varepsilon}^{c}) \leq
    \frac{A_{R}^{(n)}u(x)}{\left(\frac{2b}{(1-a)\varepsilon}\right)} < \varepsilon.
\end{equation*}
\end{proof}

According to the construction given by Eskin-Margulis~\cite{eskin-margulis} of a Margulis function $u$ associated to an average $A_{R}$, in order to construct a Margulis function $u$, it is enough
to prove the following assertion:
\begin{lem}[Main Lemma]\label{lem:main-lem} Given a finite dimensional linear representation $\rho:G\to V$,
for any $0<\delta=\delta\left(R\right)\ll1$, there exists $C\left(\delta\right)<1$
such that for any $v\neq0$,\\
\begin{equation}
A_{R}\left( N_{\delta}\left(v\right)\right)^{-1}\leq C\left(\delta\right)\cdot\left(N_{\delta}\left(v\right)\right)^{-1},\label{eq:expansion-ineq}
\end{equation}
where $N_{\delta}\left(v\right)=\lVert v\rVert^{\delta}$ and $\lVert\cdot\rVert$
is a fixed norm on $V$.
\end{lem}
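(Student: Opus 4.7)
The plan is to reduce the claimed contractive inequality to a polynomial non-divergence estimate of Kleinbock--Margulis type, combined with a weight-space lower bound on the sup of $h \mapsto a_{\log R} h v$ over $B_1^H$.

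First I would observe that the orbit map $\phi_v(h) = \rho(h) v$ from $H$ into $V$ is polynomial of degree bounded by some $d = d(\rho)$ independent of $v$, since $\rho$ sends $\Lie H$ to nilpotent endomorphisms. Hence $p_{R,v}(h) := a_{\log R} \cdot h \cdot v$ is also polynomial on $B_1^H$ of degree at most $d$, with coefficients depending linearly on $v$. The classical Kleinbock--Margulis estimate for integrals of $\|p\|^{-\delta}$ against polynomial maps then yields, for $0 < \delta < 1/d$,
\begin{equation*}
\int_{B_1^H} \|p_{R,v}(h)\|^{-\delta} d\mu_H(h) \leq C_0(\delta) \cdot \Bigl( \sup_{h \in B_1^H} \|p_{R,v}(h)\| \Bigr)^{-\delta},
\end{equation*}
where $C_0(\delta)$ depends only on $d$ and $\dim V$ and satisfies $C_0(\delta) \to 1$ as $\delta \to 0^{+}$.

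The second, and main, ingredient is the uniform lower bound
\begin{equation*}
\sup_{h \in B_1^H} \|a_{\log R} \cdot h \cdot v\| \geq c_0 R^{s_0} \|v\|
\end{equation*}
for constants $c_0, s_0 > 0$ depending only on $\rho$. To prove this I would decompose $V = \bigoplus_\lambda V_\lambda$ into $a$-weight spaces and write $v = \sum v_\lambda$. Since $H$ is horospherical with respect to $a$, the operators $\rho(h)$ are unipotent and strictly raise weights, so the projection of $hv$ onto $V_{\lambda_{\max}}$ is a polynomial in the coordinates of $h$ whose coefficients are linear in the components $v_\lambda$, with leading coefficient equal to $v_{\lambda_{\max}}$. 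Either $v$ already has a substantial component in the top weight, in which case evaluating at $h = e$ suffices, or a Remez-type comparison of norms on polynomials of bounded degree shows that $h \mapsto (hv)_{\lambda_{\max}}$ attains size $\gtrsim \|v\|$ somewhere on $B_1^H$. Applying $a_{\log R}$ then multiplies the top-weight component by $R^{\lambda_{\max}}$, giving the bound with $s_0 = \lambda_{\max} > 0$.

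Combining the two steps gives
\begin{equation*}
\int_{B_1^H} \|a_{\log R} h v\|^{-\delta} d\mu_H(h) \leq C_0(\delta) \cdot c_0^{-\delta} R^{-s_0 \delta} \cdot \|v\|^{-\delta},
\end{equation*}
so setting $C(\delta) := C_0(\delta) c_0^{-\delta} R^{-s_0 \delta}$ and choosing $\delta = \delta(R)$ small enough (for fixed $R > 1$) produces $C(\delta) < 1$, since $C_0(\delta) c_0^{-\delta} \to 1$ as $\delta \to 0$ while $R^{-s_0 \delta} < 1$ strictly. The main obstacle I anticipate is establishing the uniform-in-$v$ lower bound on the sup: controlling $(hv)_{\lambda_{\max}}$ from below is delicate when $v$ is concentrated in a weight space far from $\lambda_{\max}$, and in reducible representations one may need to run the argument inside each irreducible summand (or within the associated graded of the weight filtration on $V$) rather than on $V$ as a whole, using that on each irreducible constituent the horospherical $H$ does reach the top weight.
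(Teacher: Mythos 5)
Your argument is structurally parallel to the paper's but packages the core estimate differently: you route the integral bound through the Kleinbock--Margulis $(C,\alpha)$-good inequality as a black box (giving $\int \lVert p\rVert^{-\delta} \leq C_0(\delta)\,(\sup\lVert p\rVert)^{-\delta}$), whereas the paper reproves that bound by hand --- splitting $B_1^H$ into a sublevel set $A$ where a chosen coefficient polynomial $f_{\beta,j}$ is below a threshold $\tau$ and its complement $B$, applying Remez to control $m(A)$, and then optimizing over $\tau$. Both arguments rest on the same two ingredients: the polynomial dependence of $h\mapsto \rho(h)v$ in weight-space coordinates, and an ``anchor'' lower bound guaranteeing a uniformly non-vanishing projection to a positive weight space. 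Your version of the anchor is slightly stronger than what the Main Lemma needs: you insist on the top weight $\lambda_{\max}$, whereas the paper's Anchor Lemma only produces some $\beta\in\Phi^+$ with $\sup_{\bar t}\lvert f_{\beta,j}\rvert \geq M_\rho > 0$, which already suffices. The top-weight variant is true in the irreducible case (the paper proves it separately in the optimization Proposition), but it requires precisely the extra care you flag for vectors concentrated in low weight spaces; using the weaker anchor avoids that delicacy entirely.

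There is one genuine slip in your closing step. The inference ``choosing $\delta=\delta(R)$ small enough (for fixed $R>1$) produces $C(\delta)<1$, since $C_0(\delta)c_0^{-\delta}\to 1$ while $R^{-s_0\delta}<1$'' is invalid: all three factors tend to $1$ as $\delta\to 0$, and to first order $\log C(\delta)\approx \delta\bigl(C/\alpha - \log c_0 - s_0\log R\bigr)$, which is negative only once $\log R$ clears a threshold determined by $\rho$. Thus taking $\delta$ small alone does not yield contraction for an arbitrary fixed $R>1$; you must also take $R$ large, exactly as the paper's own proof concludes (``for $\lVert a\rVert \gg_\delta 0$, $C_{a,\delta}<1$'') and as the hypothesis $R\gg 1$ in the main Theorem encodes. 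With that repair your argument is correct, and it is a cleaner packaging of the same estimates.
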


Once this Lemma is proven, a procedure discussed in~\cite[\S3]{eskin-margulis} yields a suitable Margulis function by optimizing various height functions on the group.

\section{Proof of Lemma~\ref{lem:main-lem}}
In this section we are going to prove Lemma~\ref{lem:main-lem} in our case.

Due to homogeneity of the norm, it is enough to prove the above inequality
for $v\in V$ with $\lVert v\rVert=1$.\\
%We assume that our group $G$ is $\mathbb{Q}$-split.
Given an element $a\in G$, we may choose a maximal Cartan subgroup
$T\leq G$ defined over $\mathbb{Q}$ containing $a$, and consequently, we denote by $\Phi$
the related root system for $G$.

We may decompose the representation space $V$ as follows:
\begin{equation*}
V=\oplus_{\alpha\in\Phi}V_{\alpha},
\end{equation*}

where $V_{\alpha}=\left\{ v\in V\mid\rho\left(t\right).v=\alpha\left(t\right).v\right\} $
for some weight $\alpha$ (c.f.~\cite[$\S5.7$]{benoist-quint-book}).

We also define the following decomposition of $V$ into sub-spaces $$V=V_{-}\oplus V_{0}\oplus V_{+},$$
where 
$$V_{-}=\left\{ v\in V\mid\lVert\rho\left(a\right).v\rVert<\lVert v\rVert\right\}, $$ $$V_{0}=\left\{ v\in V\mid\lVert\rho\left(a\right).v\rVert=\lVert v\rVert\right\} $$
and $$V_{+}=~\left\{ v\in V\mid\lVert\rho\left(a\right).v\rVert>\lVert v\rVert\right\}. $$

Choosing a basis $\left\{ V_{\alpha,i}\right\} _{i=1}^{\dim V_{\alpha}}$
for each $V_{\alpha}$, we may write every $v\in V$ according to the decomposition as 
\begin{equation}
v=\sum_{\alpha\in\Phi}\sum_{i=1}^{\dim V_{\alpha}}c_{\alpha,i}\cdot v_{\alpha,i},\label{eq:vec-decomposition}
\end{equation}
 for some scalars $\left\{ c_{\alpha,i}\right\} _{\alpha\in\Phi,\ i\in\left\{ 1,\ldots,\dim V_{\alpha}\right\} }\subset\mathbb{C}$.

The following technical Lemma derives an explicit formula for the
$H$-action in this $G$-representation, emphasizing the polynomial
nature of the action.
\begin{lem}
$\rho\left(h\right).v=\sum_{\beta\in\Phi}\sum_{j=1}^{\dim V_{\beta}}f_{\beta,j}\left(\overline{t}\right)\cdot v_{\beta,j}$
for $h\in H$, for some fixed polynomials $f_{\beta,j}:\left[0,1\right]^{\dim H}\to\mathbb{R}$,
where $h=\exp\left(\sum_{i=1}^{\dim H}t_{i}\underline{h}_{i}\right)$
with $\left\{ \underline{h}_{i}\right\} _{i=1}^{\dim H}$ being a
fixed basis of $Lie\left(H\right)$.
\end{lem}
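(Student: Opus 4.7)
The plan is to exploit the fact that $H$ is horospherical, hence unipotent, and therefore $\rho(h)$ is a polynomial function of $h$ in any finite-dimensional representation. The whole assertion reduces to this single observation about the nilpotency of $d\rho(X)$ for $X\in \Lie(H)$, together with elementary expansion of the matrix exponential.

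First I would recall that every $h\in H$ is $\Ad$-unipotent, so every $X\in \Lie(H)$ is $\Ad$-nilpotent in $\lieg$. Under the differential $d\rho:\lieg\to \mathrm{End}(V)$ of the representation, an $\Ad$-nilpotent element of a semisimple Lie algebra is sent to a nilpotent endomorphism of $V$: indeed, decomposing $V$ into $T$-weight spaces (using the same Cartan subgroup $T$ as in the lemma's setup), each $d\rho(\underline{h}_i)$ shifts weights by a fixed root of $H$, and only finitely many weights occur. Consequently there is an integer $N=N(\rho)$, bounded in terms of $\dim V$, such that $(d\rho(X))^{N+1}=0$ for every $X\in \Lie(H)$.

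Next, writing $h=\exp(X)$ with $X=\sum_{i=1}^{\dim H} t_i\, \underline{h}_i$, the identity $\rho(\exp(X))=\exp(d\rho(X))$ combined with the nilpotency of $d\rho(X)$ gives the finite expansion
\begin{equation*}
\rho(h) \;=\; \sum_{k=0}^{N} \frac{1}{k!}\bigl(d\rho(X)\bigr)^{k} \;=\; \sum_{k=0}^{N}\frac{1}{k!}\Bigl(\sum_{i=1}^{\dim H} t_i\, d\rho(\underline{h}_i)\Bigr)^{k}.
\end{equation*}
Expanding the $k$-th power multinomially shows that each matrix coefficient of $\rho(h)$, in any fixed basis of $V$, is a polynomial in $\overline{t}=(t_1,\ldots,t_{\dim H})$ of total degree at most $N$, with coefficients determined by the fixed operators $d\rho(\underline{h}_i)$.

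Finally I would apply the operator $\rho(h)$ to the decomposition $v=\sum_{\alpha,i} c_{\alpha,i}\, v_{\alpha,i}$ and read off the coordinates in the basis $\{v_{\beta,j}\}$; this produces the polynomials $f_{\beta,j}(\overline{t})$ explicitly as linear combinations of the matrix entries of $\rho(h)$ against the scalars $c_{\alpha,i}$. The only genuinely content-bearing step is the nilpotency of $d\rho(X)$; everything after that is formal linear algebra, and no uniformity beyond $\deg f_{\beta,j}\le N(\rho)$ is needed for the applications later in the section.
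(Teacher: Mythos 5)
Your proof is correct and follows essentially the same route as the paper: use nilpotency to truncate the exponential to a polynomial, expand multinomially in $\overline{t}=(t_1,\ldots,t_{\dim H})$, and read off the coefficients in the weight basis $\{v_{\beta,j}\}$. The only real difference is one of bookkeeping: the paper truncates the group-level exponential $\exp\colon \Lie(H)\to H$ with degree bound $L\le\dim H$ and then writes $\rho(\underline{h}^{\ell})$ (a mild abuse, since $\underline{h}^{\ell}$ is not a group element), whereas you truncate $\exp(d\rho(X))$ directly on $V$ after justifying nilpotency of $d\rho(X)$ via the weight-shifting argument, keeping $\rho$ and $d\rho$ cleanly separated and yielding a bound $N(\rho)$ depending on $\dim V$. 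Either bound is finite and either suffices; the substance of the two proofs is the same.
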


\begin{proof}
Write $h=\exp\left(\underline{h}\right)$ for $\underline{h}\in Lie\left(H\right)$,
where $\exp$ is a polynomial expression as $H$ is a nilpotent group,
$\exp\left(\underline{h}\right)=\sum_{j=0}^{L}b_{j}\cdot\underline{h}^{j}$
for some scalars $b_{j}\in\mathbb{C}$, and $L=L\left(G,H\right)\in\mathbb{N}$ with
$L\leq\dim H$.

Hence we may write 
\begin{align}
\rho\left(h\right).v & =\rho\left(\exp\left(\underline{h}\right)\right).v\nonumber \\
 & =\rho\left(\sum_{\ell=0}^{L}b_{\ell}\cdot\underline{h}^{\ell}\right).v\nonumber \\
 & =\sum_{\ell=0}^{L}b_{\ell}\cdot\rho\left(\underline{h}^{\ell}\right).v,\label{eq:exp-translation}
\end{align}
for some $L=L\left(G,H\right)$.

Writing $v$ according to the basis $v=\sum_{\alpha\in\Phi}\sum_{i=1}^{\dim V_{\alpha}}c_{\alpha,i}\cdot v_{\alpha,i}$
we get
\begin{equation*}
\sum_{\ell=0}^{L}b_{\ell}\cdot\rho\left(\underline{h}^{\ell}\right).v=\sum_{\alpha\in\Phi}\sum_{i=1}^{\dim V_{\alpha}}c_{\alpha,i}\left(\sum_{\ell=0}^{L}b_{\ell}\cdot\rho\left(\underline{h}^{\ell}\right).v_{\alpha,i}\right).
\end{equation*}
Note that $\underline{h}$ may be represented as $\underline{h}=\sum_{k=1}^{\dim H}t_{k}\cdot\underline{h}_{k}$
for some basis $\left\{ \underline{h}_{k}\right\} _{k=1}^{\dim H}$
of $Lie\left(H\right)$.

Hence $\rho\left(\underline{h}^{\ell}\right)=\sum_{\overline{\gamma}\in\left\{ 1,\ldots,\dim H\right\} ^{\ell}}t_{\overline{\gamma}}\rho\left(\underline{h}_{\overline{\gamma}}\right)$,
where we use the multi-index notion for $\overline{\gamma}$ and $t_{\overline{\gamma}}$ is the associated monomial.
For a multi-index $\overline{\gamma}$ , write $\rho\left(\underline{h}_{\overline{\gamma}}\right).v_{\alpha,i}$
according to the basis $\left\{v_{\beta,j} \right\}_{\beta\in \Phi, 1\leq j \leq \dim V_{\beta}}$ as
\begin{equation}
\rho\left(\underline{h}_{\overline{\gamma}}\right).v_{\alpha,i}=\sum_{\beta\in\Phi}\sum_{j=1}^{\dim V_{\beta}}d_{\beta,j,\alpha,i}\left(\underline{h}_{\overline{\gamma}}\right)\cdot v_{\beta,j}.\label{eq:action-decomposition}
\end{equation}
Therefore we get the following expression for $\rho\left(h\right).v$:
\begin{align*}
\rho\left(h\right).v & =\sum_{\alpha\in\Phi}\sum_{i=1}^{\dim V_{\alpha}}c_{\alpha,i}\left(\sum_{\ell=0}^{L}b_{\ell}\cdot\sum_{\overline{\gamma}\in\left\{ 1,\ldots,\dim H\right\} ^{\ell}}t_{\overline{\gamma}}\rho\left(\underline{h}_{\overline{\gamma}}\right).v_{\alpha,i}\right)\\
 & =\sum_{\alpha\in\Phi}\sum_{i=1}^{\dim V_{\alpha}}c_{\alpha,i}\left(\sum_{\ell=0}^{L}b_{\ell}\cdot\sum_{\overline{\gamma}\in\left\{ 1,\ldots,\dim H\right\} ^{\ell}}t_{\overline{\gamma}}.\sum_{\beta\in\Phi}\sum_{j=1}^{\dim V_{\beta}}d_{\beta,j,\alpha,i}\left(\underline{h}_{\overline{\gamma}}\right)\cdot v_{\beta,j}\right)\\
 & =\sum_{\beta\in\Phi}\sum_{j=1}^{\dim V_{\beta}}\left(\sum_{\ell=0}^{L}b_{\ell}\cdot\sum_{\overline{\gamma}\in\left\{ 1,\ldots,\dim H\right\} ^{\ell}}t_{\overline{\gamma}}\sum_{\alpha\in\Phi}\sum_{i=1}^{\dim V_{\alpha}}c_{\alpha,i}\cdot d_{\beta,j,\alpha,i}\left(\underline{h}_{\overline{\gamma}}\right)\right)\cdot v_{\beta,j.}
\end{align*}
We define the following polynomials $f_{\beta,j}\left(t_{1},\ldots,t_{\dim H}\right)$:
\begin{equation}
f_{\beta,j}\left(\overline{t}\right)=\sum_{\ell=0}^{L}b_{\ell}\cdot\sum_{\overline{\gamma}\in\left\{ 1,\ldots,\dim H\right\} ^{\ell}}t_{\overline{\gamma}}\sum_{\alpha\in\Phi}\sum_{i=1}^{\dim V_{\alpha}}c_{\alpha,i}\cdot d_{\beta,j,\alpha,i}\left(\underline{h}_{\overline{\gamma}}\right),\label{eq:coeff-poly}
\end{equation}
and we may write an explicit expression for $\rho\left(h\right).v$
as:
\begin{equation*}
\rho\left(h\right).v=\sum_{\beta\in\Phi}\sum_{j=1}^{\dim V_{\beta}}f_{\beta,j}\left(\overline{t}\right)\cdot v_{\beta,j}.
\end{equation*}

Note that $\deg\left(f_{\beta,j}\right)\leq L\leq\dim H$.
\end{proof}
We have the following explicit description for the $a$-action in this
representation, for a vector $v=\sum_{\beta\in\varphi}\sum_{j=1}^{\dim V_{\beta}}c_{\beta,j}v_{\beta,j}$:
\begin{equation*}
\rho\left(a\right).v=\sum_{\beta\in\Phi}e^{\chi_{a}\left(\beta\right)}\sum_{j=1}^{\dim V_{\beta}}c_{\beta,j}\cdot v_{\beta,j},
\end{equation*}
where $\chi_{a}(\beta)$ is the appropriate co-weight.

Therefore we get the explicit description of the full action of $a\in T$
and $h\in H$ as: 
\begin{equation}
\rho\left(a\cdot h\right).v=\sum_{\beta\in\Phi}\sum_{j=1}^{\dim V_{\beta}}e^{\chi_{a}\left(\beta\right)}\cdot f_{\beta,j}\left(\overline{t}\right)\cdot v_{\beta,j}.\label{eq:full-a-action}
\end{equation}

\begin{lem}[Anchor Lemma] For every $v\in V$, $v\neq0$, there exists some $\beta\in\Phi^{+}$
and $1\leq j\leq\dim V_{\beta}$ such that $\sup_{\overline{t}\in\left[0,1\right]^{\dim H}}\left|f_{\beta,j}\left(\overline{t}\right)\right|>0$.
\end{lem}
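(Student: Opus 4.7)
The plan is to recast the statement in terms of the enveloping algebra $U(\mathfrak{h})$ of $\Lie(H)$ and exploit the fact that every element of $\mathfrak{h}$ has strictly positive $\Ad(a)$-weight. First I would note, using~\eqref{eq:coeff-poly}, that $f_{\beta,j}(\overline{t})$ is nothing other than the Taylor expansion in $\overline{t}$ of the $v_{\beta,j}$-coefficient of $\rho(\exp(\underline{h})).v$; in particular, the polynomials $f_{\beta,j}$ for $\beta\in\Phi^+$ all vanish identically on $[0,1]^{\dim H}$ (equivalently, as polynomials) if and only if
\begin{equation*}
U(\mathfrak{h}).v\ \subseteq\ V_-\oplus V_0.
\end{equation*}

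The easy half is when $v$ already has a nonzero component in $V_+$: the $\ell=0$ term of~\eqref{eq:coeff-poly} gives $f_{\beta,j}(0)=c_{\beta,j}$, so any $(\beta,j)$ with $\beta\in\Phi^+$ and $c_{\beta,j}\neq 0$ immediately witnesses the claim. The substantive case is therefore $v\in V_-\oplus V_0$, where the positive-weight component must be manufactured by the $H$-action itself.

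In this case I would argue by contradiction from the displayed inclusion. Set $W:=U(\mathfrak{h}).v$, a nonzero $\mathfrak{h}$-invariant subspace of $V$, and let $\alpha_*$ be a weight occurring in $W$ that maximizes $\chi_a$. Pick a nonzero weight vector $w_*\in W\cap V_{\alpha_*}$. For any $X\in\mathfrak{h}$, the vector $\rho(X).w_*$ lies in a weight space of $\chi_a$-weight strictly greater than $\chi_a(\alpha_*)$, so by maximality of $\alpha_*$ inside $W$ one must have $\rho(X).w_*=0$; that is, $w_*$ is $\mathfrak{h}$-annihilated.

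The final step — which I expect to be the main obstacle — is to rule out that such an $\mathfrak{h}$-annihilated vector can lie in $V_-\oplus V_0$. For this I would decompose the cyclic $G$-module generated by $w_*$ into nontrivial irreducible $G$-subrepresentations and invoke the standard structural fact that in each such irreducible the $\mathfrak{h}$-annihilated vectors form the highest-weight subspace for the parabolic $P$ normalizing $\mathfrak{h}$; all of its $T$-weights differ from the highest weight by roots of the Levi of $P$, on which $\chi_a$ vanishes, so they share the single $\chi_a$-value of the highest weight, which is a nonzero dominant integral weight and hence strictly positive under $\chi_a$ by the horospherical condition that $a$ lies in the open chamber cut out by $\mathfrak{h}$. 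This forces $\chi_a(\alpha_*)>0$, contradicting $w_*\in V_-\oplus V_0$ and completing the argument.
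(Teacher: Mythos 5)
Your argument is correct in outline but takes a genuinely different route from the paper's. The paper's proof, after reducing to an irreducible $V$, observes that $\rho(O_-)$ and $\rho(O_0)$ preserve $V_-\oplus V_0$, while $\rho(O_+)\subset\rho(H)$ keeps the orbit of $v$ inside $V_-\oplus V_0$ by the vanishing hypothesis; iterating the local decomposition $O_-O_0O_+$ then shows $\rho(G).v\subset V_-\oplus V_0$, so the span of the $G$-orbit of $v$ is a proper invariant subspace (since $V$ has a positive weight), contradicting irreducibility. You instead locate a top $\chi_a$-weight occurring in $W=U(\mathfrak{h}).v$, extract an $\mathfrak{h}$-annihilated weight vector $w_*$ there, and invoke the structure theory of $\mathfrak{h}$-invariants in an irreducible module (they form the Levi-submodule through the highest weight, of constant positive $\chi_a$-value). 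The paper's route is more elementary in that it needs only PBW-type bookkeeping and irreducibility, while yours brings in the parabolic highest-weight theory as an extra input; in exchange your version makes very transparent why the obstruction comes precisely from $\mathfrak{h}$-annihilated vectors with nonpositive $\chi_a$-weight.

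One point needs tightening. You write ``pick a nonzero weight vector $w_*\in W\cap V_{\alpha_*}$,'' but $W=U(\mathfrak{h}).v$ is generally \emph{not} $T$-stable (only $\mathrm{Ad}(T)$-stable up to translating $v$), so this intersection may well be zero even though $\alpha_*$ occurs in $W$ in the sense that $\Pi_{\alpha_*}(W)\neq 0$. The fix is standard but should be made explicit: take $w\in W$ with $\Pi_{\alpha_*}(w)\neq 0$ and set $w_*:=\Pi_{\alpha_*}(w)$. Then for a root vector $X_\beta\in\mathfrak{h}\cap\mathfrak{g}_\beta$ one has $\rho(X_\beta).w_*=\Pi_{\alpha_*+\beta}\bigl(\rho(X_\beta).w\bigr)$, which vanishes because $\rho(X_\beta).w\in W$ and $\chi_a(\alpha_*+\beta)>\chi_a(\alpha_*)$ forces $\Pi_{\alpha_*+\beta}(W)=0$ by maximality. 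Thus $w_*$ is $\mathfrak{h}$-annihilated (it need not lie in $W$, but it lies in $V$, which is all you use). You should also say a word to rule out that $w_*$ lands entirely in a summand on which $\chi_a$ vanishes (e.g.\ a trivial constituent), but this is the same mild hypothesis on $V$ that the paper invokes via the theorem of the highest weight, and it holds for the wedge representations used later.
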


This lemma shows that as $H$ acts by raising the weights over $V$, there exists some \emph{positive weight} subspace $V_{\beta}$ for which the function $f_{\beta,j}(\overline{t})$ is not the constant function $0$.

\begin{proof}
Assume that $V$ is an irreducible $G$-representation. In the non-irreducible one, we move to the cyclic constituent generated by $\langle G.v\rangle \leq V$. It follows from the theorem of highest weight (c.f.~\cite[\S V, Theorem~$5.5$]{Knapp_1996}) that this sub-representation contains vector of positive weights.

For a small neighborhood $O$ of the identity $e\in G$ which generates
$G$ we may write $O$ as $O_{-}\cdot O_{0}\cdot O_{+}$ where $$O_{-}=\exp\left(\underline{O}_{-}\right), O_{0}=\exp\left(\underline{O}_{0}\right), O_{+}=\exp\left(\underline{O}_{+}\right),$$
for a proper partition $\underline{O}_{-}\oplus\underline{O}_{0}\oplus\underline{O}_{+}$
of a small neighborhood of the identity $\underline{0}\in Lie\left(G\right)$,
where 
$$\underline{O}_{-}\subset\oplus_{\alpha\in\Phi^{-}}Lie\left(G\right)_{\alpha},\underline{O}_{0}\subset\oplus_{\alpha\in\Phi^{0}}Lie\left(G\right)_{\alpha},\underline{O}_{+}\subset\oplus_{\alpha\in\Phi^{+}}Lie\left(G\right)_{\alpha}.$$

Note that by the definition of the weight spaces for the representation $(\rho,V)$, we have for $\underline{g}\in g_{\alpha}$ and $v\in V_{\beta}$
that 
\begin{equation*}
\rho\left(\underline{g}\right).v\in V_{\alpha+\beta},
\end{equation*}
see also \cite[\S V, Proposition~$5.4$]{Knapp_1996}.
Moreover, $H=\exp\left(Lie\left(H\right)\right)$, with $Lie\left(H\right)\leq\oplus_{\alpha\in\Phi^{+}}Lie\left(G\right)_{\alpha}$.

Assume the contrary, namely for every $\beta\in\Phi^{+}$ and every
$1\leq~j\leq~\dim V_{\beta}$ and every $\overline{t}\in\left[0,1\right]^{\dim H}$,
we have that $f_{\beta,j}\left(\overline{t}\right)\equiv0$.

This assumption forces $v$ to be supported only in $V_{-}\oplus~V_{0}$.

As $f_{\beta,j}\left(\overline{t}\right)$ are polynomial functions,
this means that the polynomials $f_{\beta,j}$ are identically zero for any $\overline{t}$, for all $\beta\in\Phi^{+}$ and $1\leq j \leq \dim V_{\beta}$.

Hence the projection to the positive weight spaces satisfies $$\Pi_{\Phi^{+}}\left(\rho\left(H\right).v\right)=0.$$

%$\rho$ was assumed to be irreducible, so we have that $\rho\left(G\right).v=V$.

Moreover, an easy weight calculation demonstrates that $\rho\left(O_{-}\right).V_{-}\oplus V_{0},\rho\left(O_{0}\right).V_{-}\oplus V_{0}\subset V_{-}\oplus V_{0}$.

We conclude that $V_{-}\oplus V_{0}$ is a \emph{$G$-invariant subspace of $V$}, as $G$ is generated by $O_{-}O_{0}O_{+}$. Therefore
$V=~V_{-}\oplus~V_{0}$ as we assumed the representation is irreducible, which leads to a contradiction.
\end{proof}

Denote $M_{\rho}$ to be 
\begin{equation}
M_{\rho}=\min_{v\in V,\ \lVert v\rVert=1}\left(\max_{\beta\in\Phi^{+},1\leq j\leq \dim V_{\beta}}\max_{\overline{t}\in\left[0,1\right]^{\dim H}}\left|f_{\beta,j}\left(\overline{t}\right)\right|\right).\label{eq:anchor-estimate}
\end{equation}
By the previous Lemma, $M_{\rho}>0$.
Essentially this observation shows that the projection to at-least one \emph{positive} weight space $V_{\beta}$ is non-zero.
As this is a positive weight space, the norm of this projection grows under the $a$-action.

\section{Derivation of Margulis' inequality}

During the course of the proof of the Margulis' inequality, we will need a sub-level estimate for polynomial functions. Such a result follows from Remez' inequality and verifies the $(C,\alpha)$-good property of polynomial maps as defined in Kleinbock-Margulis~\cite{kleinbock-margulis} and Eskin-Mozes-Shah~\cite{Eskin_Mozes_Shah_1996}. We provide the details bellow.

\begin{thm}[Remez' inequality~\cite{brudnyui1973extremal} Theorem~$2$]
	Let $B\subset\mathbb{R}^{n}$ be a non-empty convex subset, $f:B\to\mathbb{R}$ be polynomial of degree $d$. For $\epsilon>0$ we set $Z_{B,\epsilon}=~\left\{x\in B \mid \lvert f(x)\rvert < \varepsilon\right\}$, then:
	\begin{equation*}
	\sup_{x\in B}\lvert f(x) \rvert \leq \varepsilon \cdot T_{d}\left(\frac{1+(1-m(Z_{B,\epsilon})/m(B))^{1/n}}{1-(1-m(Z_{B,\epsilon})/m(B))^{1/n}}\right),
	\end{equation*}
	where $T_{d}$ is the $d$-th Chebyshev polynomial of the first kind.
\end{thm}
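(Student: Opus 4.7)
The proof plan is to reduce the multidimensional inequality to the one-dimensional Remez inequality by restricting $f$ to a carefully chosen half-line issuing from a maximum point of $|f|$ on $B$; convexity of $B$ provides the geometric input. First, I would recall the one-dimensional version: for a polynomial $g$ of degree $d$ on an interval $I$ and $E\subset I$ equal to the sublevel set $\{t\in I:|g(t)|<\varepsilon\}$, one has
\[
\sup_I |g|\;\leq\; \varepsilon\cdot T_d\!\left(\frac{1+(1-|E|/|I|)}{1-(1-|E|/|I|)}\right).
\]
This follows from the extremal property of Chebyshev polynomials: among polynomials of degree $d$ with $\sup_{[-1,1]}|p|\leq 1$, the polynomial $T_d$ has the fastest growth outside $[-1,1]$. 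After rescaling $I$ affinely to $[-1,1]$ and using that the bound is sharpest when $I\setminus E$ is rearranged to a sub-interval at one end, one composes $g$ with the affine bijection between $[-1,1]$ and $E$ and applies the Chebyshev growth estimate on the complement of $E$.

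Second, I pass from one to $n$ dimensions by locating a good direction via polar integration. Let $y_0\in B$ realize $\sup_B|f|$, and set $\sigma=(1-\theta)^{1/n}$ with $\theta=m(Z_{B,\varepsilon})/m(B)$. For each unit direction $v\in S^{n-1}$, convexity of $B$ ensures that the ray $\{y_0+rv:r\geq 0\}$ meets $B$ in a segment of length $\rho(v)$, and meets $B\setminus Z_{B,\varepsilon}$ in a set $A_v\subset[0,\rho(v)]$ of one-dimensional measure $\ell(v)$. I claim there exists $v$ with $\ell(v)/\rho(v)\leq \sigma$. If not, polar decomposition at $y_0$ gives $m(B\setminus Z_{B,\varepsilon})=\int_{S^{n-1}}\int_{A_v}r^{n-1}\,dr\,d\omega(v)$, and the rearrangement fact that $\int_A r^{n-1}\,dr$ over a measurable $A\subset[0,\rho(v)]$ of fixed measure $\ell(v)$ is minimized when $A=[0,\ell(v)]$ yields $\int_{A_v}r^{n-1}\,dr\geq \ell(v)^n/n>\sigma^n\rho(v)^n/n$ for every $v$; integrating produces $m(B\setminus Z_{B,\varepsilon})>\sigma^n m(B)=(1-\theta)m(B)$, contradicting $m(B\setminus Z_{B,\varepsilon})=(1-\theta)m(B)$.

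Finally, for a good direction $v$, restrict $f$ to the segment $[y_0,y_0+\rho(v)v]\subset B$, obtaining a univariate polynomial $g(r)=f(y_0+rv)$ of degree at most $d$. Its sublevel set $\{r:|g(r)|<\varepsilon\}$ has relative measure at least $1-\sigma$ in $[0,\rho(v)]$, so the one-dimensional Remez inequality combined with the monotonicity of $(1+x)/(1-x)$ and of $T_d$ on $[1,\infty)$ gives
\[
|f(y_0)|=|g(0)|\;\leq\;\varepsilon\cdot T_d\!\left(\frac{1+\sigma}{1-\sigma}\right)\;=\;\varepsilon\cdot T_d\!\left(\frac{1+(1-\theta)^{1/n}}{1-(1-\theta)^{1/n}}\right),
\]
which is the claimed bound. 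The main technical obstacle is the pigeonhole step isolating a good direction: it genuinely exploits the polar-coordinate weighting $r^{n-1}$, and this weighting is precisely the source of the characteristic exponent $1/n$ appearing in the final inequality.
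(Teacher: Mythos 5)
The paper does not prove this statement; it is quoted verbatim as a known result of Brudnyi--Ganzburg, cited to \cite{brudnyui1973extremal}, so there is no in-paper argument to compare against. Your proposal is nevertheless a correct proof, and it follows the route that is, in essence, the original Brudnyi--Ganzburg argument: reduce to the one-dimensional Remez inequality by sweeping rays from a point $y_0$ where $|f|$ is maximized, and use the polar-coordinate weight $r^{n-1}$ together with the bathtub principle ($\int_A r^{n-1}\,dr$ is minimized over sets $A\subset[0,\rho]$ of fixed measure by pushing $A$ to the origin) to pigeonhole a direction along which the bad set has relative measure at most $(1-\theta)^{1/n}$. Your bookkeeping is consistent: with $\lambda=|E|/|I|$ the one-dimensional bound $T_d\bigl(\tfrac{2-\lambda}{\lambda}\bigr)$ equals the classical $T_d\bigl(\tfrac{2+s}{2-s}\bigr)$ after $s=(1-\lambda)|I|$, and plugging $\lambda=1-\sigma$ with $\sigma=(1-\theta)^{1/n}$ reproduces the stated exponent $1/n$. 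Two small points worth making explicit in a written-up version: one should replace $B$ by its closure so that the maximizer $y_0$ exists (harmless since a bounded convex set and its closure have equal Lebesgue measure), and the strict inequality in the pigeonhole step should be justified by noting that $\rho(v)>0$ on a set of directions of positive measure whenever $m(B)>0$, so the pointwise strict bound survives integration. You also, reasonably, only sketch the one-dimensional Remez inequality (the rearrangement step showing the extremal exceptional set is a boundary interval is the genuinely nontrivial part), but that is a standard ingredient and fine to quote.
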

As a corollary one may deduce the following estimate:
\begin{cor}[Sub-level estimate]
	Let $B\subset\mathbb{R}^{n_1}$ be a non-empty convex subset, $f:B\to\mathbb{R}^{n_2}$ be polynomial of degree $d$, one has:
	\begin{equation}\label{eq:remez-ineq}
	m\left\{x\in B \mid \lVert f \rVert<\varepsilon \right\} \leq 4\cdot n_1 \cdot \left(\frac{\varepsilon}{\sup_{x\in B}\left\{\lVert f(x)\rVert \right\}}\right)^{1/d}\cdot m(B),
	\end{equation}
	where $m$ denotes the Lebesgue measure over $\mathbb{R}^{n_1}$.
\end{cor}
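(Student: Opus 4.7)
The plan is to deduce the vector-valued sub-level estimate from the scalar Remez inequality stated just above, with the factor $4n_{1}$ arising from two elementary estimates.

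First I would reduce to the scalar case. Set $M:=\sup_{x\in B}\lVert f(x)\rVert$ and choose $x_{0}\in B$ with $\lVert f(x_{0})\rVert$ arbitrarily close to $M$. Taking $u\in\mathbb{R}^{n_{2}}$ to be a unit vector in the direction of $f(x_{0})$, the scalar polynomial $g(x):=\langle u,f(x)\rangle$ has degree $\leq d$ and, by Cauchy--Schwarz, satisfies $|g(x)|\leq\lVert f(x)\rVert$. Consequently
$$\{x\in B:\lVert f(x)\rVert<\varepsilon\}\subset\{x\in B:|g(x)|<\varepsilon\}=:Z_{B,\varepsilon},$$
while $\sup_{x\in B}|g(x)|\geq|g(x_{0})|=\lVert f(x_{0})\rVert$, which can be made arbitrarily close to $M$. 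Thus it suffices to bound $m(Z_{B,\varepsilon})/m(B)$ in terms of $\varepsilon/M$.

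Next I would apply Remez' inequality to $g$. Writing $\lambda:=m(Z_{B,\varepsilon})/m(B)$ and $y:=(1-\lambda)^{1/n_{1}}$, Remez gives
$$\frac{M}{\varepsilon}\;\leq\;\frac{\sup_{x\in B}|g(x)|}{\varepsilon}\;\leq\; T_{d}\!\left(\frac{1+y}{1-y}\right).$$
To invert this I would use the standard estimate $T_{d}(z)\leq(2z)^{d}$ for $z\geq 1$, which follows from $T_{d}(\cosh t)=\cosh(dt)\leq e^{dt}=(\cosh t+\sinh t)^{d}\leq(2\cosh t)^{d}$. This gives $(1-y)/(1+y)\leq 2(\varepsilon/M)^{1/d}$, and since $1+y\leq 2$ we conclude $1-y\leq 4(\varepsilon/M)^{1/d}$.

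Finally I would convert back to $\lambda$ using the Bernoulli-type inequality $1-y^{n_{1}}\leq n_{1}(1-y)$ valid for $y\in[0,1]$ (obtained from $y^{n_{1}-1}+\cdots+1\leq n_{1}$), so that
$$\lambda\;=\;1-y^{n_{1}}\;\leq\; n_{1}(1-y)\;\leq\; 4n_{1}\!\left(\frac{\varepsilon}{M}\right)^{1/d},$$
which is exactly \eqref{eq:remez-ineq}. The only step requiring any care is the passage from the Chebyshev bound to a linear-in-$n_{1}$ factor; the Chebyshev estimate must be used in the right direction and combined with $1-y^{n_{1}}\leq n_{1}(1-y)$. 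The rest is bookkeeping, and the case when the supremum $M$ is not attained is handled by an obvious limiting argument on $x_{0}$.
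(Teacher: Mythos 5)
Your proof is correct. The paper does not actually supply a proof of this corollary --- it only remarks that the estimate ``may be deduced'' from Brudnyi--Ganzburg's Remez inequality --- so there is nothing to compare it against except in spirit; your derivation is the natural one. Two small remarks on the details you correctly handle: (i) the reduction to the scalar case by projecting onto a unit vector $u$ in the direction of a near-maximizer $f(x_0)$ works because $|\langle u,f(x)\rangle|\le\lVert f(x)\rVert$ gives the inclusion $\{\lVert f\rVert<\varepsilon\}\subset\{|g|<\varepsilon\}$ while $\sup_B|g|\ge\lVert f(x_0)\rVert\to M$, and since $T_d(z)$ is increasing in $d$ for $z\ge1$ there is no harm in the possibility that $\deg g<d$; (ii) the Chebyshev estimate $T_d(z)\le(2z)^d$ for $z\ge1$ and the telescoping bound $1-y^{n_1}\le n_1(1-y)$ for $y\in[0,1]$ are both correct and together yield precisely the factor $4n_1$. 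The only thing I would make explicit is that the argument $(1+y)/(1-y)$ handed to $T_d$ is $\ge 1$ automatically (since $y\in[0,1]$), which is what licenses the use of the hyperbolic-cosine form of the Chebyshev polynomial; and when $y=1$ (sublevel set of measure zero) the conclusion is trivial, so one may assume $y<1$. With those observations the argument is complete and tight up to constant factors.
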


\subsection{Main proof of the Margulis' inequality}
We may now begin our proof of the Margulis' inequality.
Considering $G=SL_{n}(\R)$, we exhibit a specific set of representations as follows: $(\rho_{1},\R^n)$ - the representation of $G$ on $\R^n$ by matrix multiplication. For all $2\leq i\leq n$ we extend this definition to a representation $(\rho_i, \bigwedge^{i}\R)$ by considering exterior products.

In order to prove a Margulis' inequality, the following key lemma due to Eskin-Margulis~\cite{eskin-margulis} will be used:
\begin{lem}[\cite{eskin-margulis}, Proposition $2.6$ and Section \S3]\label{lem:E-M-main-lem}
Consider the representations $\left\{\rho_{i}\right\}_{i=1}^{n}$. Let $(\rho,V)$ be any of those representations. Assume that for all  $\delta>0$ small enough there exists some $c=c(R,\delta)<1$ such that for all $0\neq v\in V$ the following holds:
\begin{equation*}
    \int_{h\in B_{1}^{H}}\lVert a_{\log R}.h.v\rVert^{-\delta}dh \leq c\cdot \lVert v\rVert.
\end{equation*}
Then $A_{\mu}$ satisfy a Margulis' inequality for constants $\alpha=\alpha(c)<1, \beta=~\beta(c)\in\mathbb{R}$.
\end{lem}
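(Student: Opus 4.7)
The plan is to follow the blueprint of Eskin--Margulis~\cite{eskin-margulis} \S3, constructing an explicit Margulis function $u \colon X \to \R$ as a weighted sum of height-like functions arising from the representations $\rho_1, \ldots, \rho_{n-1}$, and then verifying that the per-representation decay hypothesis propagates to an inequality of the form $A_R u \le \alpha \cdot u + \beta$.

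First, for each $1 \le i \le n-1$ I would introduce the height
\[
\alpha_i(g\Gamma) = \min_{v} \lVert \rho_i(g) v \rVert,
\]
where the minimum runs over nonzero primitive decomposable vectors $v \in \bigwedge^i \Z^n$, corresponding to primitive rank-$i$ sublattices of $\Z^n$. By Mahler's criterion and the reduction theory of $\SL_n(\Z)$, a lattice is deep in the cusp precisely when some $\alpha_i(x)$ is small. Consequently, for any sufficiently small positive exponents $\delta_i$, the function
\[
u(x) = \sum_{i=1}^{n-1} \alpha_i(x)^{-\delta_i}
\]
is proper on $X$, so the first defining property of a Margulis function is automatic.

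Next, to establish the averaging inequality, fix $x = g\Gamma$ and expand
\[
A_R (\alpha_i^{-\delta_i})(x) = \int_{B_1^H} \min_{v} \lVert a_{\log R}.h.\rho_i(g) v \rVert^{-\delta_i} \, dh.
\]
Applying the hypothesis to each individual primitive $v$ yields
\[
\int_{B_1^H} \lVert a_{\log R}.h.\rho_i(g) v \rVert^{-\delta_i} \, dh \le c(R,\delta_i) \cdot \lVert \rho_i(g) v \rVert^{-\delta_i},
\]
but the vector achieving the minimum may depend on $h$ and differ from the one realizing $\alpha_i(x)$. To bridge this, I would exploit a geometric fact from the reduction theory of $\SL_n$: at any $x$ and in each exterior power, only a bounded number $N_i$ of primitive decomposable vectors can be simultaneously near-minimal for $\rho_i(g)$ --- say, with $\lVert \rho_i(g) v \rVert \le 2\alpha_i(x)$. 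This is a Minkowski/Schmidt-type bound on heights of rational subspaces. Splitting the pointwise minimum into contributions from the near-minimal primitive vectors and from the rest, applying the hypothesis to the former and absorbing the latter into an additive constant $\beta$, and summing over $i$, one arrives at
\[
A_R u(x) \le \max_{i} \bigl( c(R,\delta_i) \cdot N_i \bigr) \cdot u(x) + \beta.
\]
Choosing each $\delta_i$ small enough that $c(R,\delta_i) \cdot N_i < 1$ uniformly in $i$ yields the Margulis inequality with $\alpha = \max_i c(R,\delta_i) \cdot N_i < 1$.

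The main obstacle I expect lies not in the per-vector decay hypothesis (which is given) but in the geometric step of controlling the number $N_i$ of simultaneously near-minimal primitive vectors across exterior powers, and in choosing the exponents $\delta_i$ compatibly so that the resulting $\alpha$ is strictly less than $1$. This is precisely where the fine cusp structure of $\SL_n(\R)/\SL_n(\Z)$ intervenes --- through rational parabolic subgroups and their Levi decompositions --- and is the most delicate technical ingredient inherited from Eskin--Margulis.
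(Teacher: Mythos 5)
Your overall outline --- build $u$ as a combination of the height functions $\alpha_i$ attached to the exterior powers, verify properness via Mahler/reduction theory, and then push the per-vector hypothesis through the averaging operator --- does match the Eskin--Margulis blueprint that the paper cites. But the step that closes the averaging inequality has a genuine gap, and it is exactly the point where the construction is delicate.

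You conclude with $\alpha = \max_i \bigl(c(R,\delta_i)\cdot N_i\bigr)$, and then propose to win by choosing $\delta_i$ small enough that $c(R,\delta_i)\cdot N_i < 1$. This does not work: the hypothesis only supplies $c < 1$, not $c < 1/N_i$, and there is no mechanism to make $c$ arbitrarily small by shrinking $\delta$. Indeed, in the derivation of the Margulis inequality carried out in the paper, the final bound has the shape $C_{a,\delta} \leq C\cdot e^{-\delta\cdot(\text{positive quantity})}$, so $c(R,\delta) \to C$ (a constant not forced to be small) as $\delta\to 0$; what makes $c<1$ is taking $R$ large, not $\delta$ small. So your $\alpha$ has no reason to be $<1$. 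A second, related problem is that the vector realizing $\alpha_i(a_{\log R}h.x)$ as $h$ varies over $B_1^H$ need not lie among the vectors that are near-minimal \emph{for $x$}; a Minkowski-type count of near-minimal primitive vectors at the base point does not control the minimum after the flow, so ``absorbing the rest into $\beta$'' is not justified.

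What the Eskin--Margulis argument (Proposition~2.6 and \S3, as summarized in the paper with $u_{\epsilon,\delta}(\Lambda) = \sum_{i} \epsilon^{i(n-i)}\alpha_i(\Lambda)^{\delta}$) actually uses is the sub-multiplicativity of covolumes of rational subspaces, $d(L_1\cap L_2)\,d(L_1+L_2)\leq d(L_1)\,d(L_2)$, together with the strictly convex weights $\epsilon^{i(n-i)}$. When the near-minimal subspace in dimension $i$ is not unique, sub-multiplicativity forces some $\alpha_j$ with $j\neq i$ to be comparably extreme, and the $\epsilon^{i(n-i)}$ weighting then makes these cross-terms an $O(\epsilon)$ fraction of the dominant term. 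Choosing $\epsilon$ proportional to $1-c$ (as the paper records: $\epsilon = \tfrac{1-C(\delta)}{3n\omega^2}$) yields $\alpha = \tfrac{1+2C(\delta)}{3}$, which is $<1$ precisely when $c<1$ --- no improvement of $c$ to $1/N_i$ is needed. The bare sum $\sum\alpha_i^{-\delta_i}$ lacks this mechanism, so you should incorporate the $\epsilon^{i(n-i)}$ weights and the sub-multiplicativity lemma rather than the $N_i$-count.
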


By writing the average explicitly using the formulas achieved above we get:
%Prop (Proof of main step to Margulis inequality):

\begin{align*}
\int_{h\in B_{1}^{H}}\frac{1}{\lVert\rho\left(a.h\right).v\rVert^{\delta}}dh & =\int_{\overline{t}\in\left[0,1\right]^{\dim H}}\frac{1}{\left(\sum_{\beta\in\Phi}\sum_{j=1}^{\dim V_{\beta}}e^{2\chi_{a}\left(\beta\right)}\cdot f_{\beta,j}\left(\overline{t}\right)^{2}\right)^{\frac{\delta}{2}}}d\overline{t}.
\end{align*}

Fix some $\beta,j$ where $\beta\in\Phi^{+},1\leq j\leq\dim V_{\beta}$
is an index such that 
\begin{equation}\label{eq:coeff-estimate}
\sup_{\overline{t}\in\left[0,1\right]^{\dim H}}\left|f_{\beta,j}\left(\overline{t}\right)\right|\geq M_{\rho}.
\end{equation}

Let $\tau$ be a small positive number, to be specified later. In the proof that follows, all constant of the form $C_{\star}$ will be absolute in terms of $(\rho,V)$ and $H$, \emph{not pertaining} to any particular $v\in V$.

Let $A\subset\left\{ \overline{t}\in\left[0,1\right]^{\dim H}\right\} $
be the set of $\overline{t}$ such that $f_{\beta,j}\left(\overline{t}\right)^{2}<\tau^{2}$,
and denote $B$ to be its complement.
We note that as $\rho(h)$ is a continuous invertible operator for any $h\in H$, we have that there exists some $C=C(\rho)>0$ such that for all $v\in V$, $\left\lVert v\right\rVert=1$ we have
\begin{equation*}
    \min_{h\in \exp\left([0,1]^{\dim H}\right)} \left\lVert \rho(h).v\right\rVert = \min_{\underline{t}\in [0,1]^{\dim H}}\left\lVert \sum_{\beta\in\Phi}\sum_{j=1}^{\dim V_{\beta}}f_{\beta,j}(\underline{t})^{2} \right\rVert \geq C.
\end{equation*}

We may estimate the integral as follows 

\begin{align}\label{eq:margulis-ineq-estimate}
\begin{split}
&\int_{\overline{t}\in\left[0,1\right]^{\dim H}}\frac{1}{\left(\sum_{\beta\in\Phi}\sum_{j=1}^{\dim V_{\beta}}e^{2\chi_{a}\left(\beta\right)}\cdot f_{\beta,j}\left(\overline{t}\right)^{2}\right)^{\frac{\delta}{2}}}d\overline{t} \\
&\ =\int_{A}\frac{1}{\left(\sum_{\beta\in\Phi}\sum_{j=1}^{\dim V_{\beta}}e^{2\chi_{a}\left(\beta\right)}\cdot f_{\beta,j}\left(\overline{t}\right)^{2}\right)^{\frac{\delta}{2}}}d\overline{t}
+ \int_{B}\frac{1}{\left(\sum_{\beta\in\Phi}\sum_{j=1}^{\dim V_{\beta}}e^{2\chi_{a}\left(\beta\right)}\cdot f_{\beta,j}\left(\overline{t}\right)^{2}\right)^{\frac{\delta}{2}}}d\overline{t}\\
&\ \leq\int_{A}\frac{1}{\left(\sum_{\beta\in\Phi}\sum_{j=1}^{\dim V_{\beta}}e^{2\chi_{a}\left(\beta\right)}\cdot f_{\beta,j}\left(\overline{t}\right)^{2}\right)^{\frac{\delta}{2}}}d\overline{t} +\int_{B}\frac{1}{\left(e^{2\chi_{a}\left(\beta\right)}\cdot f_{\beta,j}\left(\overline{t}\right)^{2}\right)^{\frac{\delta}{2}}}d\overline{t}\\
&\ \leq\int_{A}e^{-\delta\cdot \chi_{a}\left(\min\Phi^{-}\right)}\cdot C^{-\delta}d\overline{t} +\int_{B}\frac{1}{\left(e^{2\chi_{a}\left(\beta\right)}\cdot f_{\beta,j}\left(\overline{t}\right)^{2}\right)^{\frac{\delta}{2}}}d\overline{t}\\
&\ \leq m\left(A\right)e^{-\delta\cdot\chi_{a}\left(\min\Phi^{-}\right)}\cdot C^{-\delta} +m\left(B\right)e^{-\delta\cdot\chi_{a}\left(\beta\right)}\cdot\tau^{-\delta}\\
&\ \leq m\left(A\right)e^{-\delta\cdot\chi_{a}\left(\min\Phi^{-}\right)}\cdot C^{-\delta}+e^{-\delta\cdot\chi_{a}\left(\beta\right)}\cdot\tau^{-\delta}.
\end{split}
\end{align}

By Remez' inequality~\eqref{eq:remez-ineq} - $m\left(A\right)<C_{1}\cdot\left(\frac{\tau}{M_{\rho}}\right)^{\alpha}$,
so we have the following bound - 

\begin{equation*}
\begin{split}
    &\int_{\overline{t}\in\left[0,1\right]^{\dim H}}\frac{1}{\left(\sum_{\beta\in\Phi}\sum_{j=1}^{\dim V_{\beta}}e^{2\chi_{a}\left(\beta\right)}\cdot f_{\beta,j}\left(\overline{t}\right)^{2}\right)^{\frac{\delta}{2}}}d\overline{t} \\
    &\ \ \leq C_{1}\cdot\left(\frac{\tau}{M_{\rho}}\right)^{\alpha}e^{-\delta\cdot\chi_{a}\left(\min\Phi^{-}\right)}\cdot C^{-\delta}+e^{-\delta\cdot\chi_{a}\left(\beta\right)}\cdot\tau^{-\delta}.
\end{split}
\end{equation*}

Optimizing for $\tau$ gives: 
\begin{equation*}
\tau^{\delta+\alpha}=C_{2}\cdot e^{\delta\left(\chi_{a}\left(\min\Phi^{-}-\beta\right)\right)},
\end{equation*}
or equivalently 
\begin{equation*}
\tau=C_{3}\cdot e^{\frac{\delta}{\delta+\alpha}\chi_{a}\left(\min\Phi^{-}-\beta\right)},
\end{equation*}
leading to a bound over the right hand side of~\eqref{eq:margulis-ineq-estimate} by
\begin{align*}
C_{a,\delta} &= C_{1}\cdot\left(\frac{\tau}{M_{\rho}}\right)^{\alpha}e^{-\delta\cdot\chi_{a}\left(\min\Phi^{-}\right)}\cdot C^{-\delta}+e^{-\delta\cdot\chi_{a}\left(\beta\right)}\cdot\tau^{-\delta} \\ & \leq 2C_{4}\cdot  e^{\frac{-\delta^{2}}{\delta+\alpha}\chi_{a}\left(\min\Phi^{-}-\beta\right)}\cdot e^{-\delta\cdot\chi_{a}(\beta)} \\
 & =C_{5}\cdot e^{-\delta\cdot\chi_{a}\left((1-\frac{\delta}{\delta+\alpha})\cdot\beta +\frac{\delta}{\delta+\alpha}\cdot\min\Phi^{-}\right)}.
\end{align*}
Hence for
\begin{equation*}
(1-\frac{\delta}{\delta+\alpha})\cdot\chi_{a}(\beta) < -\frac{\delta}{\delta+\alpha}\cdot\chi_{a}(\min\Phi^{-}),
\end{equation*}
or equivalently
\begin{equation}\label{eq:delta-cond}
    \delta < \alpha\cdot \frac{-\chi_{a}\left(\min \Phi^{-}\right)}{\chi_{a}(\beta)},
\end{equation}
we get a proper decay, as long as we let $\lVert a \rVert$ grow, and in particular, for any choice of admissible $\alpha$ per~\eqref{eq:delta-cond} we have that for $\lVert a \rVert \gg_{\delta} 0$, $C_{a,\delta}<1$.

This verifies Lemmma~\ref{lem:E-M-main-lem}.

\begin{ex}\label{ex:SL2-example}
For the diagonal matrix $a=\text{Diag}(e^{1/2},e^{-1/2})$, with the standard representation of $SL_{2}(\mathbb{R})$ on $\mathbb{R}^{2}$, using~\eqref{eq:delta-cond} we get $$\beta=1/2, \min\Phi^{-}=-1/2,$$ and the polynomial controlling $m(A)$ is linear, so $\alpha=1$. This leads to an estimate of $$\delta<1.$$
\end{ex}

We end this section with a brief discussion about optimization.
Remez' inequality gives sharp results for a general class of polynomials. 
In the following lemma, we show that we need to only consider polynomials of special kind to our purposes, improving upon Remez' inequality.

\begin{prop}
	Given a representation $\left(\rho,V\right)$, there exists a number $p=p(\rho)$ such that the polynomial $f_{\beta,j}(\overline{t})$ for $\beta,j$ being the subspace of maximal weight, is non-zero and there is a multi-index $\overline{\gamma}$ of length bounded by $p$ so that $\left\lvert\frac{\partial^{\lvert \gamma\rvert}}{\partial \overline{\gamma}}f_{\beta,j}(\overline{t})\right\rvert>0$.
\end{prop}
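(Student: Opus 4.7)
The plan is to leverage two uniform facts: the polynomials $f_{\beta,j}(\overline{t})$ have degree bounded purely in terms of $\rho$, and any non-zero polynomial is distinguished from $0$ by a partial derivative of order at most its degree whose value at the origin is a non-zero constant. The Anchor Lemma already produces at least one $(\beta,j)$ with $\beta\in\Phi^+$ and $f_{\beta,j}\not\equiv 0$, and I read the phrase ``subspace of maximal weight'' as selecting such a pair with $\chi_a(\beta)$ maximal.

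The first step is the degree bound. For any $\underline{h}\in\Lie(H)\subset\bigoplus_{\alpha\in\Phi^+}\mathfrak{g}_\alpha$, the operator $\rho(\underline{h})$ sends a weight vector in $V_{\mu}$ into $\bigoplus_{\alpha\in\Phi^+}V_{\mu+\alpha}$, and iterating $\ell$ times lands in weight spaces whose weight exceeds the starting one by a sum of $\ell$ positive roots. Because the set of weights of $V$ is finite, there is a number $p=p(\rho)$ (one may crudely take $p(\rho)=\dim V-1$) with $\rho(\underline{h})^\ell=0$ for every $\underline{h}\in\Lie(H)$ whenever $\ell>p(\rho)$. The polynomial expansion $\rho(\exp\underline{h})=\sum_{\ell=0}^{p(\rho)}\rho(\underline{h})^\ell/\ell!$ is then a polynomial of total degree at most $p(\rho)$ in the coordinates $t_1,\ldots,t_{\dim H}$, and hence each $f_{\beta,j}$ defined in~\eqref{eq:coeff-poly} has degree at most $p(\rho)$, uniformly in the vector $v\in V$.

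For the second step, fix the pair $(\beta,j)$ selected above, so that $f_{\beta,j}$ is a non-zero polynomial in $\overline{t}$ of degree at most $p(\rho)$. Pick any monomial $c_{\overline{\gamma}}\,\overline{t}^{\overline{\gamma}}$ of $f_{\beta,j}$ with $c_{\overline{\gamma}}\neq 0$; then $|\overline{\gamma}|\leq p(\rho)$. The differential operator $\partial^{|\overline{\gamma}|}/\partial\overline{t}^{\overline{\gamma}}$ annihilates every monomial that is not divisible by $\overline{t}^{\overline{\gamma}}$ and sends $\overline{t}^{\overline{\gamma}}$ to the non-zero constant $\bigl(\prod_i \gamma_i!\bigr)c_{\overline{\gamma}}$. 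Consequently
\begin{equation*}
\frac{\partial^{|\overline{\gamma}|}}{\partial\overline{t}^{\overline{\gamma}}}f_{\beta,j}(\overline{t})\Big|_{\overline{t}=0}\neq 0,
\end{equation*}
so the derivative is non-vanishing on a neighbourhood of the origin, which yields the desired conclusion with $|\overline{\gamma}|\leq p(\rho)$.

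The only real obstacle is obtaining a degree bound on $f_{\beta,j}$ that depends on the representation $\rho$ but not on $v$; once this is in hand, the remainder is an elementary observation about polynomials. The nilpotency argument above secures precisely this uniformity, because the nilpotency index of $\rho|_{\Lie(H)}$ is dictated only by the weight structure of $V$ and not by the expansion coefficients of the vector $v$.
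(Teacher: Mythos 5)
Your argument handles the two elementary pieces cleanly (a uniform degree bound on the $f_{\beta,j}$ via nilpotency of $\rho|_{\Lie(H)}$, and the observation that a non-zero polynomial of degree $\le p$ has a non-vanishing partial derivative of order $\le p$), but it sidesteps the real content of the proposition by reinterpreting its hypothesis. You read ``the subspace of maximal weight'' as ``the pair $(\beta,j)$ with $\chi_a(\beta)$ maximal \emph{among those for which $f_{\beta,j}\not\equiv 0$},'' which is what the Anchor Lemma hands you for free. The proposition, however, asserts non-vanishing of $f_{\beta,j}$ for $\beta=\max\Phi$ --- the genuine highest weight of the representation --- and this is strictly stronger than the Anchor Lemma. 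The paper needs the stronger claim: immediately after the proposition it substitutes $\beta=\max\Phi^{+}$ into the constraint~\eqref{eq:delta-cond} to get $\delta<\alpha\cdot\chi_a(\min\Phi^{-})/\chi_a(\max\Phi^{+})$, and this is precisely the improvement the proposition is meant to buy.

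The paper establishes $f_{\max\Phi,j}\not\equiv 0$ via an irreducibility argument: if the $H$-orbit of $v$ (or of its projection to its lowest supported weight space $V_\eta$) had no component in $V_{\max\Phi}$, then $\rho(G).v'$ would generate a proper sub-representation, contradicting irreducibility. Along the way the paper takes $p$ to be the length of the shortest positive-root string from $\min\Phi$ to $\max\Phi$ and exhibits a specific monomial $t^{\overline\gamma}$ of that length in $f_{\max\Phi,j}$. Your nilpotency bound for $p$ is compatible (though typically cruder), and your monomial-selection step is fine, but without the irreducibility step you have not shown anything about $f_{\max\Phi,j}$ itself. To repair the argument you would need to add: assume $V$ irreducible (or pass to the cyclic constituent $\langle G.v\rangle$ as in the Anchor Lemma), and show that the projection of $\rho(H).v$ to the highest weight space $V_{\max\Phi}$ is not identically zero, for instance by the sub-representation argument the paper uses.
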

\begin{proof}
	Let $\min\Phi$ be a minimal weight vector in $V$.
	We define $p$ to be the length of the shortest weight string starting in $\min\Phi$ ending in $\max\Phi$.
	As $\min\Phi$ is the minimal weight, for any other weight $\eta\in\Phi$ we have that the shortest length of a weight string starting at $\eta$ and ending in $\max\Phi$ is bounded by $p$ as well.
	Given a vector $v$ for which the lowest supported weight is $\eta$, there exists a weight string starting at $\eta$ ending in $\max\Phi$ of length bounded by $p$.
	Let $\overline{\gamma}$ by the associated root word.
	We claim that $f_{\beta,j}(\overline{t})$ contains the monomial $t^{\overline{\gamma}}$, for $\beta=~\max\Phi$.
	If not, define the vector $v'$ to be just the projection of $v$ into the $V_\eta$ weight space.
	By assumption, $\rho(H).v'$ would not be supported in $V_{\beta}$, and by the same reasoning as the anchor Lemma, $\rho(G).v'$ would generate a sub-representation, contradicting irreducibility.
	Therefore, $\left\lvert\frac{\partial^{\lvert \gamma\rvert}}{\partial \overline{\gamma}}f_{\beta,j}(\overline{t})\right\rvert = C\neq 0$.
\end{proof}
 Hence we may modify the sub-level estimate using the classical gradient inequality (i.e.~\cite{carbery}), giving $\alpha=1/p$.

This modification leads to an improvement in the decay rate estimate, and moreover to the assumption that $\beta$ is the maximal weight in~\eqref{eq:delta-cond}, leading to the constraint
\begin{equation*}
\delta<\alpha \cdot \frac{\chi_{a}(\min \Phi^{-})}{\chi_{a}(\max \Phi^{+})}
\end{equation*}
with a decay rate bounded by $-\frac{1/p}{\delta+1/p}\chi_{a}\left(\max \Phi^{+}\right) - \frac{\delta^{2}}{\delta+1/p}\chi_{a}(\min \Phi^{-})$.

\section{Some applications}

\subsection{Recurrence of translates of unipotent trajectories.}

The main purpose of this subsection is to carry out the strategy given
in Eskin-Margulis in-order to derive a quantitative explicit uniform
non-divergence statement for the $A_{R}$-operator.

\begin{lem*}
[Eskin-Margulis~\cite{eskin-margulis} Lemma~$3.1$] - Suppose that there exists a positive continuous
function $u:G/\Gamma\to\mathbb{R}$ with the following properties:
\end{lem*}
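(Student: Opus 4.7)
The plan is to reduce the lemma to the iterated form of the Margulis inequality and then extract quantitative non-divergence by applying Markov's inequality to the proper function $u$. This mirrors the strategy used in the tightness Lemma of Section~2, but the constants have to be tracked carefully so that the resulting compact set is genuinely uniform in the base point rather than one tailored to each $x$ separately.

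First, I would iterate the hypothesis $A_{R}u \leq \alpha u + \beta$ $n$ times, which gives
\begin{equation*}
A_{R}^{(n)}u(x) \leq \alpha^{n}u(x) + \frac{\beta}{1-\alpha},
\end{equation*}
so that the expectation of $u$ against $\mu_{n,x} = A_{R}^{(n)}\delta_{x}$ is bounded by the constant $1 + \beta/(1-\alpha)$ as soon as $n \geq n_{0}(x) := \lceil \log u(x)/\log(1/\alpha)\rceil$. Markov's inequality then converts this into a mass bound: for every threshold $L > 0$,
\begin{equation*}
\mu_{n,x}\bigl(\{y : u(y) > L\}\bigr) \leq \frac{1}{L}\left(\alpha^{n}u(x) + \frac{\beta}{1-\alpha}\right),
\end{equation*}
and choosing $L = \varepsilon^{-1}\bigl(1 + \beta/(1-\alpha)\bigr)$ together with the properness of $u$ produces a compact set $K_{\varepsilon} \subset G/\Gamma$, independent of $x$, such that $\mu_{n,x}(K_{\varepsilon}) > 1 - \varepsilon$ for all $n \geq n_{0}(x)$.

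The subtle point, which I expect to be the main source of bookkeeping, is that $n_{0}(x)$ depends on the height $u(x)$ of the starting point. This dependence cannot be eliminated in general: a lattice lying close to a short periodic $H$-orbit deep in the cusp genuinely requires logarithmically many $A_{R}$-iterations before its forward average is pushed back into the compact core, reflecting exactly the ``small period'' obstruction discussed in the introduction. What the lemma really asserts is that the \emph{target} compact set $K_{\varepsilon}$ can nevertheless be chosen uniformly in $x$, which is precisely what the argument above delivers. The one ingredient that must be supplied externally is the construction of the Margulis function $u$ from the representation-level estimate~\eqref{eq:expansion-ineq}: this is where Lemma~\ref{lem:main-lem}, combined with the Eskin-Margulis optimization procedure of~\cite{eskin-margulis} that amalgamates heights built from the representations $\rho_{i}$ of Section~3 into a single proper function satisfying a single averaging inequality, enters the picture.
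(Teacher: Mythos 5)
Your proof is correct and takes essentially the same route as the paper: iterate the drift inequality to bound $A_R^{(n)}u$ by a constant once $n$ exceeds a threshold of order $\log u(x)$, then apply Markov's inequality and properness of $u$ to extract a compact sublevel set $K_\varepsilon$ that is independent of $x$. This is exactly the iterate-then-Markov argument the paper reproduces as Lemma~2.3 and then reuses here, with your $n_0(x)$ and $L$ matching (up to inessential constant choices) the explicit $M(x)$ and $K$ the paper records in~\eqref{eq:K-M-formulas}.
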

\begin{enumerate}
\item (Properness) $u\left(x\right)\to\infty$ as $x\to\infty$.
\item (Margulis inequality) There exists $c_{1}<1$ and $b>0$ and $n>0$
such that for any $x\in G/\Gamma$
\begin{equation}
A_{R}^{\left(n\right)}u\left(x\right)\leq c_{1}\cdot u\left(x\right)+b,\label{eq:quant-margulis-ineq}
\end{equation}
where $A_{R}^{\left(n\right)}$ means the $n$-fold composition of $A_{R}$.
\end{enumerate}
Then $A_{R}$ has the quantitative recurrence property. The proof of this Lemma follows verbatim from Eskin-Margulis~\cite[Lemma~$3.2$]{eskin-margulis}.

The proof \cite[Lemma~$3.2$]{eskin-margulis} is effective, establishing
the following values for $K,M$:

\begin{equation}
K=\left\{ y\in G/\Gamma\mid u\left(y\right)<\frac{2\frac{b}{1-c_{1}}}{\varepsilon}\right\} ,\ M\left(x\right)=\frac{\log\left(\frac{u\left(x\right)}{\frac{b}{1-c_{1}}}\right)}{\log\left(\frac{1}{c_{1}}\right)}.\label{eq:K-M-formulas}
\end{equation}

Moreover, Eskin-Margulis have demonstrated a construction of a suitable
function (c.f. \cite[Section $3.1$]{eskin-margulis}) for $G/\Gamma=SL_{n}\left(\mathbb{R}\right)/SL_{n}\left(\mathbb{Z}\right)$
as follows:

For every $\Lambda\in G/\Gamma$, we define $u_{\epsilon,\delta}\left(\Lambda\right)=\sum_{i=0}^{n}\epsilon^{i\left(n-i\right)}\alpha_{i}\left(\Lambda\right)^{\delta}$,
where 
\begin{equation*}
\alpha_{i}\left(\Lambda\right)=\sup\left\{ \frac{1}{\text{covol}\left(L\right)}\mid L\ \text{is a \ensuremath{\Lambda}-rational subspace of dimension \ensuremath{i}}\right\} 
\end{equation*}
 for $0\leq i\leq d$, then for sufficiently small $\epsilon,\delta>0$,
$u_{\epsilon,\delta}$ satisfies the Eskin-Margulis Lemma.

We will now determine a suitable $\epsilon,\delta$ for our case and
as a result, get an estimate for $b,c_{1}$.

Let $\delta_{0}$ be a number so that the Lemma~\ref{lem:main-lem} is satisfied. 

Per the computations in Eskin-Margulis, picking $\delta=\frac{\delta_{0}}{n}$,
$\epsilon=\frac{1-C\left(\delta\right)}{3n\omega^{2}}$ we get $c_{1}=\frac{1+2C\left(\delta\right)}{3},\ b=2$,
hence 

\begin{equation}
\begin{split}
    &K=\left\{ y\in G/\Gamma\mid u\left(y\right)<\frac{6}{\varepsilon\left(1-C\left(\delta\right)\right)}\right\} ,\\ &M\left(x\right)=\frac{\log\left(\frac{u\left(x\right)}{\frac{1}{1-C\left(\delta\right)}}\right)}{\log\left(\frac{3}{1+2C\left(\delta\right)}\right)}=\frac{\log\left(u\left(x\right)\right)+\log\left(1-C\left(\delta\right)\right)}{\log\left(3\right)-\log\left(1+2C\left(\delta\right)\right)}.
\end{split}\label{eq:explicit-K-M}
\end{equation}

Fixing $C\left(\delta\right)<1$, we have the following recurrence
estimate:

For $m>\frac{\log\left(u\left(x\right)\right)+\log\left(1-C\left(\delta\right)\right)}{\log\left(3\right)-\log\left(1+2C\left(\delta\right)\right)}$:
\begin{equation}
\left(A_{R}^{\left(m\right)}\star\delta_{x}\right)\left\{ y\in G/\Gamma\mid\sum_{i=0}^{n}\left(\frac{1-C\left(\delta\right)}{3n\omega^{2}}\right)^{i\left(n-i\right)}\alpha_{i}\left(y\right)^{\delta}<\frac{6}{\varepsilon\left(1-C\left(\delta\right)\right)}\right\} >1-\varepsilon.\label{eq:explicit-u}
\end{equation}

In particular, we have that 
\begin{equation*}
\left(A_{R}^{\left(m\right)}\star\delta_{x}\right)\left\{ y\in G/\Gamma\mid\left(\frac{1-C\left(\delta\right)}{3n\omega^{2}}\right)^{n-1}\alpha_{1}\left(y\right)^{\delta}<\frac{6}{\varepsilon\left(1-C\left(\delta\right)\right)}\right\} >1-\varepsilon,
\end{equation*}
or by re-arranging 
\begin{equation*}
\left(A_{R}^{\left(m\right)}\star\delta_{x}\right)\left\{ y\in G/\Gamma\mid\alpha_{1}\left(y\right)<\left(\frac{C_{\dim}\left(n\right)}{\varepsilon\left(1-C\left(\delta\right)\right)^{n}}\right)^{1/\delta}\right\} >1-\varepsilon.
\end{equation*}

Using the relation between $\alpha_{1}\left(y\right)$ and the injectivity
radius at $y$ we may deduce that 
\begin{equation*}
\mu^{\left(m\right)}\left\{ y\in G/\Gamma\mid \text{InjRad}\left(y\right)>\left(\frac{\varepsilon\left(1-C\left(\delta\right)\right)^{n}}{C_{\dim}\left(n\right)}\right)^{n/\delta}\right\} >1-\varepsilon.
\end{equation*}

Relation to horospherical equdistribution:

As $B_{r}^{H}a_{\log t}=a_{\log t}B_{r/t}^{H}$ we have the following
formula:

\begin{equation*}
\left(aB_{r}^{H}\right)\cdot\left(aB_{r}^{H}\right)=a_{2\log t}B_{r/t}^{H}\cdot B_{r}^{H}\subset a_{2\log t}B_{r\left(1+1/t\right)}^{H},
\end{equation*}

and by induction we deduce 
%!!!!FIX THAT!!!!!!
\begin{align*}
\left(aB_{r}^{H}\right)\cdot\left(aB_{r}^{H}\right)\cdots\left(aB_{r}^{H}\right) & =a_{m\log t}B_{r/t^{m-1}}^{H}\cdot B_{r/t^{m-2}}^{H}\cdots\cdot B_{r}^{H}\\
 & \subset a_{m\log t}B_{r\sum_{i=0}^{m-1}t^{-i}}^{H}\\
 & =a_{m\log t}B_{r\left(\frac{1-t^{-m}}{1-t}\right)}^{H}\\
 & \subset a_{m\log t}B_{r\cdot\frac{t^{m}-1}{t^{m}\left(t-1\right)}}^{H}\\
 & \subset a_{m\log t}B_{r\cdot\frac{t}{\left(t-1\right)}}^{H}\\
 & \subset a_{m\log t}B_{2r}^{H}
\end{align*}

for $t\geq2$, where 
\begin{equation*}
    \begin{split}
        m&=\frac{\log\left(u\left(x\right)\right)+\log\left(1-C\left(\delta\right)\right)}{\log\left(3\right)-\log\left(1+2C\left(\delta\right)\right)}\\
        &\leq\frac{\log\left(u\left(x\right)\right)+\log\left(1-C\left(\delta\right)\right)}{\log\left(3\right)-\log\left(1+2C\left(\delta\right)\right)}\\
        &=\frac{\log\left(u\left(x\right)\right)+\log\left(1-C\left(\delta\right)\right)}{\log\left(3\right)-\log\left(1+2C\left(\delta\right)\right)}.
    \end{split}
\end{equation*}

We may bound $\log\left(u\left(x\right)\right)$ in a fairly crude manner
as 
\begin{align*}
\log\left(u\left(x\right)\right) & =\log\left(\sum_{i=0}^{n}\epsilon^{i\left(n-i\right)}\alpha_{i}\left(x\right)\right)\\
 & \leq\log\left(\sum_{i=0}^{n}\epsilon^{i\left(n-i\right)}\alpha\left(x\right)\right)\\
 & =\log\left(\alpha\left(x\right)\right)+\log\left(\sum_{i=0}^{n}\epsilon^{i\left(n-i\right)}\right).
\end{align*}

Therefore $m\leq\frac{\log\left(u\left(x\right)\right)+D\left(\delta\right)}{E\left(\delta\right)}$,
so $m\log t=\log\left(t^{m}\right)\approx\log\left(t^{\frac{\log\left(u\left(x\right)\right)+D\left(\delta\right)}{E\left(\delta\right)}}\right)$.

We have as an immideate corollary a different proof to the following Lemma, which played a key-role in the effective period equidistribution theorem of Einsidler-Margulis-Venkatesh~\cite[Appendix $B$]{emv}, which originally was proven by methods about recurrence of unipotent flows.
\begin{cor}
Let $S\leq G$ be a semisimple connected subgroup such that the centralizer of $Lie(S)$ in $Lie(G)$ is trivial.
Then there exists a compact subset $K\subset G$ such that \emph{every} closed $S$ orbit $S.x$ intersects $K$.
\end{cor}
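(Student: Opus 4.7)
The plan is to apply the quantitative non-divergence of the $A_R$-operator, now available via the Margulis inequality proved in the main body of the paper, to an arbitrary base-point $x_0 \in S.x$, and then exploit the fact that the translation element $a$ lies in $S$ to push the resulting estimate back inside $S.x$ itself.

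First I would select a semisimple element $a \in S$ regular enough in $G$ that its associated horospherical subgroup $H \leq G$ is non-trivial and the Margulis inequality of the main theorem applies. Such an $a$ can be chosen inside a maximal $\mathbb{R}$-split Cartan $T_S \leq S$: the centralizer in $\mathfrak{g}$ of a generic element of $T_S$ is the sum of the zero weight spaces of $T_S$ on $\mathfrak{g}$, and this cannot be large, because anything centralized by all of $T_S$ is centralized by $\Lie(S)$ (since $T_S$ generates $\Lie(S)$ through its root decomposition), contradicting the triviality of the centralizer of $\Lie(S)$ in $\Lie(G)$. With this $a$ fixed, the main theorem furnishes a Margulis function $u$ and, through the tightness lemma proved in the Preliminaries, a compact $K_0 \subset X$ and an integer $n_0 = n_0(x_0)$ so that the iterated averages $\mu_{n,x_0} = A_R^{(n)} \delta_{x_0}$ satisfy $\mu_{n,x_0}(K_0) > 1 - \varepsilon$ for all $n \geq n_0$.

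Next I would extract a single good orbit point from this non-divergence statement, using exactly the telescoping computation already appearing at the end of the preceding subsection. The support of $\mu_{n,x_0}$ consists of points $a_{\log R} h_n \, a_{\log R} h_{n-1} \cdots a_{\log R} h_1 \cdot x_0$ with $h_i \in B_1^H$, and by the commutation relation $a_{\log R}^{-1} B_1^H a_{\log R} \subset B_{R^{-c}}^H$ such products collapse geometrically into the form $a_{n \log R} \tilde{h} \cdot x_0$ with $\tilde h \in B_2^H$, uniformly in $n$ (for $R \geq 2$). Combined with the mass estimate $\mu_{n,x_0}(K_0) > 0$, this produces a specific $\tilde h \in B_2^H$ with $a_{n \log R} \tilde h \cdot x_0 \in K_0$, hence
\[
a_{n \log R} \cdot x_0 \in \tilde h^{-1} K_0 \subset B_2^H \cdot K_0.
\]
Since $a \in S$ and $x_0 \in S.x$, the point $a_{n \log R} \cdot x_0$ lies in $S.x$, so $S.x \cap (B_2^H \cdot K_0) \neq \emptyset$. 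Setting $K := B_2^H \cdot K_0 \subset X$, which is compact as the continuous image of a compact set, gives the fixed compact subset claimed in the corollary.

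The main obstacle is the first step: verifying that a Cartan element $a \in S$ can simultaneously be chosen regular in $G$ and that the Margulis inequality of the main theorem holds in the exterior power representations used in \S4. This is precisely the role of the centralizer hypothesis — without it, closed $S$-orbits could be shifted towards the cusp by the centralizer of $S$ in $G$ while remaining closed, and no uniform compact could capture them. The telescoping in the second step is routine given the explicit computation already carried out in the subsection preceding the corollary, and Steps 3--4 are immediate.
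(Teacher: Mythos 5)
Your argument has two genuine gaps, one algebraic and one in the group-theoretic bookkeeping, and both stem from not engaging with where the centralizer hypothesis is actually used in the paper's proof. First, the algebraic claim in Step 1 --- that ``anything centralized by all of $T_S$ is centralized by $\Lie(S)$'' --- is false: $T_S$ is the maximal split torus of $S$, not the whole of $\Lie(S)$, and its centralizer in $\mathfrak g$ (the zero weight spaces) typically contains far more than the centralizer of $\Lie(S)$. For instance, taking $S = G = \SL_3(\mathbb{R})$, the centralizer of $T_S$ is the full $2$-dimensional Cartan while the centralizer of $\Lie(S)$ is trivial. So the centralizer hypothesis does not play the role you assign it here; the existence of a nontrivial $a\in S$ with nontrivial $H$ follows from $S$ being noncompact semisimple regardless of the hypothesis.

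Second, and more fundamentally, the compactness step is wrong. You have $a_{n\log R}\,\tilde h\, x_0 \in K_0$ with $\tilde h \in B_2^H$, and you deduce $a_{n\log R}\, x_0 \in \tilde h^{-1} K_0$. But $a$ and $\tilde h$ do not commute: the correct identity is
\[
a_{n\log R}\, x_0 = \left(a_{n\log R}\,\tilde h\, a_{-n\log R}\right)^{-1}\left(a_{n\log R}\,\tilde h\, x_0\right) \in B_{2R^n}^H\cdot K_0,
\]
since $a_{n\log R}$ \emph{expands} $H$. The right-hand set grows with $n$ and is not a fixed compact. Moreover, even if the computation were repaired, the point $a_{n\log R}\,\tilde h\, x_0$ lies in $H\cdot S.x$, not in $S.x$: the full horospherical $H$ of $G$ is generally not contained in $S$, so you have not produced a point of $S.x$ in a fixed compact. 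This is precisely the difficulty the paper's proof addresses: one must run the Margulis inequality for the \emph{$S$-average} over $H_S = H\cap S$ so that the iterated orbit points stay inside $S.x$. The Main Theorem as stated does not directly give a Margulis inequality for $A_R$ built from $H_S$; one must re-verify the Anchor Lemma (Lemma 3.3) for the restricted $H_S$-action, and the only way this can fail is if some wedge-representation vector has no positive (and, using $a^{-1}$, no negative) weight component under $H_S$, which would force $S$ into a proper parabolic $P$ of $G$ and make $\mathrm{Rad}^u(P)$ centralize the unipotents generating $\Lie(S)$ --- exactly the contradiction with the centralizer hypothesis. That reduction, not the initial choice of $a$, is where the hypothesis earns its keep, and your argument skips it entirely.
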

\begin{proof}
Consider the operator $A_{R}$ defined over $S$, when we extend the appropriate Cartan subgroup to all of $G$.
Following the analysis from the previous section, the only option for the averages $A_{R}^{(n)}.f(x)$ to not decay is if $x$ if there exists some wedge representation, $x$ is in a part of the weight space such that $H.x$ does not have any positive weight component.
Notice we may do the same construction with a negative Cartan element and the opposite horospherical.
Hence we see that all the orbit $S.x$ is contained in the negative weight space.
But that means that $S$ is contained in some proper parabolic $P$.
Now consider the unipotent radical of $P$.
It must be strictly larger than $H$.
As a result, we see that each unipotent element $\mathfrak{u} \in Lie(Rad^{U}(P))$ commutes with all the unipotent elements in $Lie(S)$.
As $S$ is semisimple, it is generated by its unipotents, and so $Lie(S)$ has a non-trivial centralizer in $Lie(G)$, in contradiction.
Consider the parabolic associated to those negative weight spaces.
We have that $N_{G}(H)=P$.
But as $N_{G}(S)=S$ (due to the Lie algebra condition), we see that $P=S$.

If so, pick some unipotent element $\mathfrak{u}\in Lie(G)$ which amounts to the top weight space.
We claim that we must have that $[\mathfrak{u},\mathfrak{s}]=0$ for each unipotent element $\mathfrak{s}$ of $Lie(S)$.

\end{proof}

\subsection{Effective equidistribution of horospherical orbits under diophantine conditions.}

The general quantitative horospherical equidistribution theorem has been proved by the first named author in~\cite{katz}, see also~\cite{strombergsson, mcadam} for the $SL_{2}$-case and the abelian case, respectively.

The main difference between \cite{strombergsson} and the results in \cite{katz,mcadam} is that that the former one uses a ``diophantine condition'' defined by the divergence rate of $a_{-\log T}.x_0$ while the later use a diophantine condition related to the recurrence time of the point $a_{-\log T}.x_0$ under the horospherical flow, which has a strong relation to the non-divergence theorem, see \cite[Corollary~$3.3$]{lmms}.

Verifying the diophantine condition given in \cite{katz,mcadam} effectively is hard (indeed, for $SL_{2}(\mathbb{R})/SL_{2}(\mathbb{Z})$, it is morally equivalent to knowing the continued fraction expansion of the point of tangency defining the horosphere, see~\cite[Lemma~$2.10$]{sarnak-ubis}).

It seems reasonable to consider the diophantine condition defined by Strombergsson for horospherical actions, as the diagonal action is prominent. Moreover, given a $U$-invariant measure having such a diophantine point as a generic point, easy application of the linearization technique show that the given measure must be the Haar measure (indeed, for any fixed compact subset of a ``tube'', applying the $a$-flow to this piece, we can see that the $U$-orbit through that base point raise slower than the compact piece of the tube, hence after long enough time, the separation would force the orbit to spend only a negligible amount of time near this piece of tube).

In this subsection we generalize Strombergsson's result to general spaces of the form $X=G/\Gamma$, for the case of $G=SL_{n}(\mathbb{R}), \Gamma=SL_{n}(\mathbb{Z})$.
In particular, we show that (non-periodic) \emph{algebraic points} satisfy such a diophantine condition (in an \emph{effective manner}) and hence satisfy a quantitative equidistribution statement.

\subsubsection{Review of horospherical equidistribution}
Below we summarize the proof of quantitative equidistribution of horospherical orbits given in~\cite{katz}.
We fix for once and for all a split Cartan element $a\in G$, and define the associated horospherical subgroup $$H=\left\{g\in G \mid a_{t}ga_{-t} \to e \text{ as }t\to -\infty \right\}.$$
We fix the unit ball in $Lie(H)$ and we call its image under the exponential map $B_{1}^{H}$. We define for general $R$ the set $B_{R}^{H}$ as 
\begin{equation*}
    B_{R}^{H}=a_{\log R}B_{1}^{H}a_{-\log R}.
\end{equation*}
The sets $\left\{ B_{R}^{H}\right\}_{R\in \mathbb{R}}$ form an increasing Folner sequence.
For a smooth function of compact support $f\in C^{\infty}_{c}(G/\Gamma)$, we define the averaging operator $A_{R}f(x)$ to be
\begin{equation}
    A_{R}f(x)=\frac{1}{\vol_{H}(B_{R}^{H})}\int_{h\in B^{R}_{H}}f(h.x)dh,
\end{equation}
where $\vol_{H}$ is defined by the (unimodular) Haar measure on $H$.

Moreover, we have the following alternative description of the averaging operator (due to the renormalization of the $a$-action)
\begin{equation*}
     A_{R}f(x)=\frac{1}{\vol_{H}(B_{1}^{H})}\int_{h\in B^{1}_{H}}f(a_{\log R}.h.a_{-\log R}.x)dh.
\end{equation*}

\begin{prop}
    There exists some $s=s(\Gamma)>0$ such that 
    \begin{equation*}
        m\left\{x\in G/\Gamma \mid \left\lvert A_{R}f(x) - \int_{G/\Gamma}fdm \right\rvert>\varepsilon \right\} \leq \varepsilon^{-2}\cdot R^{-2s}.
    \end{equation*}
\end{prop}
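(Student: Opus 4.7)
The plan is to establish the proposition by a standard $L^{2}$-variance argument combined with an effective decay of matrix coefficients on $L^{2}_{0}(X)$. First I would apply Chebyshev's inequality, reducing the claim to a mean-square bound of the form $\|A_{R}f-\bar{f}\|_{L^{2}(X)}^{2}\ll R^{-2s}$, where $\bar{f}=\int_{X}f\,dm$. Replacing $f$ by $f-\bar{f}$ allows me to assume throughout that $f$ lies in $L^{2}_{0}(X)$, the orthogonal complement of the constants.

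Next, using the alternative description of $A_{R}$ via conjugation by $a_{\log R}$, I would expand the square by Fubini's theorem and perform the measure-preserving change of variable $x\mapsto a_{\log R}h_{2}^{-1}a_{-\log R}x$ to obtain
\[
\|A_{R}f\|_{L^{2}(X)}^{2}=\frac{1}{\vol_{H}(B_{1}^{H})^{2}}\int_{B_{1}^{H}}\!\int_{B_{1}^{H}}\bigl\langle\pi(a_{\log R}(h_{1}h_{2}^{-1})a_{-\log R})f,\,f\bigr\rangle\,dh_{1}\,dh_{2},
\]
where $\pi$ denotes the unitary representation of $G$ on $L^{2}(X)$. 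This reduces the problem to estimating diagonal matrix coefficients of $\pi$ at the conjugated elements $g_{R}(h_{1},h_{2}):=a_{\log R}(h_{1}h_{2}^{-1})a_{-\log R}\in H$, each of which is a horospherical element whose size is driven by $R$ whenever $h_{1}h_{2}^{-1}$ is not too close to the identity.

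The key input is an effective decay of matrix coefficients on $L^{2}_{0}(X)$: there exist an exponent $s_{0}=s_{0}(\Gamma)>0$ and a fixed Sobolev norm $\Sob$ such that $|\langle\pi(g)f,f\rangle|\ll\|g\|^{-2s_{0}}\Sob(f)^{2}$ for all $g\in G$. Since the positive weights of $\Ad(a_{\log R})$ on $\Lie(H)$ all scale by at least $R^{c_{\min}}$ for some $c_{\min}>0$, I get $\|g_{R}(h_{1},h_{2})\|\gg R^{c_{\min}}\|h_{1}h_{2}^{-1}\|$ for $h_{1},h_{2}\in B_{1}^{H}$ close to the identity. Splitting the double integral according to whether $\|h_{1}h_{2}^{-1}\|\le R^{-\eta}$ or not, bounding the small region trivially by its measure $\ll R^{-\eta\dim H}$ and the large region by the decay estimate, and optimizing in $\eta$, yields $\|A_{R}f\|_{L^{2}}^{2}\ll_{f}R^{-2s}$ for a suitable $s=s(\Gamma)>0$. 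Combined with the initial Chebyshev step this proves the proposition.

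The main obstacle is the effective decay of matrix coefficients on $L^{2}_{0}(X)$: for higher-rank $G$ this follows from Kazhdan's property (T) combined with a Sobolev-norm trick, while for rank one (e.g.\ $\SL_{2}$) it requires a spectral gap for $\Gamma\backslash G$ such as Selberg's $3/16$ theorem or its Kim--Sarnak refinement, and the exponent $s$ then inherits this gap. Once this input is granted, the remainder of the argument is essentially bookkeeping on the expansion rate of conjugation by $a_{\log R}$ on $\Lie(H)$.
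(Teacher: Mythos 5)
Your proposal is correct and takes essentially the same approach as the paper: reduce via Chebyshev/Markov to an $L^{2}$ bound on $A_{R}f$ for $f\in L^{2}_{0}(X)$, then deduce that bound from the spectral gap. The paper states this in one line (citing the spectral gap to get $\lVert A_{R}f\rVert_{L^{2}}^{2}\leq R^{-s}$ and applying Markov), whereas you flesh out the intermediate step by expanding $\lVert A_{R}f\rVert_{L^{2}}^{2}$ into a double integral of matrix coefficients, invoking effective decay of matrix coefficients on $L^{2}_{0}$, splitting according to the size of $h_{1}h_{2}^{-1}$, and optimizing — which is indeed the standard way to carry out the step the paper leaves implicit.
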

The proof is based by observing that the function $q(x)=f(x)-~\int_{G/\Gamma}fdm$ belongs to $L^{2}_{0}(G/\Gamma)$ and the representation of $G$ on $L^{2}_{0}(G/\Gamma)$ posses a spectral gap, hence $\lVert A_{R}f \rVert_{L^{2}(m)}^{2} \leq R^{-s}$ and combining this estimate with Markov's inequality we deduce the proposition.
Picking $\varepsilon=R^{-\gamma}$ for $\gamma<s$ gives an effective mean ergodic theorem.

\begin{prop}
    Assume $x,y\in G/\Gamma$ such that $dist(a_{-\log R}.x,a_{-\log R}.y)<~\delta$ then $\lvert A_{R}f(x) - A_{R}f(y) \rvert \ll_{f} \delta$.
\end{prop}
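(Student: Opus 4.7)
The plan is to exploit the alternative expression for the averaging operator recorded just before the statement,
\begin{equation*}
A_R f(x) = \frac{1}{\vol_H(B_1^H)}\int_{B_1^H} f\bigl(a_{\log R}.h.a_{-\log R}.x\bigr)\,dh,
\end{equation*}
and to work in the rectified coordinates $\tilde x := a_{-\log R}.x$, $\tilde y := a_{-\log R}.y$, which by hypothesis satisfy $\dist(\tilde x,\tilde y)<\delta$. Writing $\tilde y = \exp(X)\tilde x$ with $X\in\g$, $\lVert X\rVert<\delta$, I would decompose $X$ along the $\Ad(a)$-weight splitting $\g = \Lie(H)\oplus\Lie(H_0)\oplus\Lie(H^-)$, with $H_0$ denoting the centralizer of $a$, as $X = X_+ + X_0 + X_-$, and estimate the contribution of each summand to $A_R f(x) - A_R f(y)$ separately.

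For each $h\in B_1^H$ I would write $a_{\log R}.h.\tilde y = \exp(\Ad(a_{\log R} h)X)\cdot a_{\log R}.h.\tilde x$ and decompose $\Ad(a_{\log R}h)X = V_+(h) + V_0(h) + V_-(h)$ along the same weight splitting. Since $\Ad(h)$ is bounded for $h\in B_1^H$, while $\Ad(a_{\log R})$ is the identity on $\Lie(H_0)$ and contracting on $\Lie(H^-)$, both $\lVert V_0(h)\rVert$ and $\lVert V_-(h)\rVert$ are $\ll\delta$ uniformly in $R$. Lipschitz continuity of $f$ then controls the contribution of the $V_0$ and $V_-$ pieces to $|f(a_{\log R}h\tilde y) - f(a_{\log R}h\tilde x)|$ by $O(\delta\lVert f\rVert_{C^1})$, pointwise in $h$ and hence after averaging.

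The remaining piece $V_+(h)\in\Lie(H)$ has norm up to $R^{\chi}\delta$, where $\chi$ is the maximal $a$-weight on $\Lie(H)$, and so cannot be treated by a direct Lipschitz bound. The key observation is that $V_+(h)$ is tangent to the $H$-orbit through $a_{\log R}h\tilde x$: writing $V_+(h) = \Ad(a_{\log R})W_+(h)$ with $W_+(h) := (\Ad(h)X)_+\in\Lie(H)$ of norm $\ll\delta$, one has the identity $\exp(V_+(h))\cdot a_{\log R}h = a_{\log R}\cdot\exp(W_+(h))\cdot h$. So the (potentially huge) displacement in the $V_+$-direction at $a_{\log R}h\tilde x$ is equivalent to a left-multiplication of $h$ by the $O(\delta)$-size element $\exp(W_+(h))\in H$, and the substitution $h\mapsto\exp(-W_+(h))h$ in the $B_1^H$-integral (legal by bi-invariance of Haar on the nilpotent $H$) absorbs the $V_+$-contribution modulo the symmetric difference between $B_1^H$ and its $O(\delta)$-translate, of Haar-measure $O(\delta)$, together with an $h$-dependent Jacobian of $1 + O(\delta)$; both yield $O(\delta\lVert f\rVert_\infty)$. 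Adding the three contributions gives $|A_R f(x) - A_R f(y)|\ll_f \delta$.

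The hardest part will be book-keeping the iterated BCH corrections when commuting the exponentials $\exp(V_+), \exp(V_0), \exp(V_-)$ into a canonically ordered product, together with the $h$-dependent Jacobian arising from the substitution above. Since $H$ is unipotent of nilpotency class at most $\dim H$, only finitely many iterations of the reduction are needed, and at each step the error is $O(\delta)$ uniformly in $R$, so the cumulative error remains $O(\delta)$.
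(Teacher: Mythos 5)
The paper itself offers no real proof of this proposition; it only remarks that it "follows from the horospherical action and our explicit choice of Folner sequence." Your argument therefore fills in the missing content, and its structure is right: pass to the alternative expression of $A_R$, conjugate the displacement by $a_{\log R}h$, and use the identity $\exp\bigl(V_+(h)\bigr)\,a_{\log R}h = a_{\log R}\exp\bigl(W_+(h)\bigr)h$ to convert the (a priori large) expanded displacement in the $\Lie(H)$-direction into a small left-translation of the integration variable.

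Two points of caution, neither fatal. First, the BCH step you call the "hardest part" is cleanest if you perform the local product decomposition \emph{before} conjugating by $a_{\log R}$: since $\exp(\Ad(h)X)$ lies within distance $O(\delta)$ of the identity uniformly in $R$, the local diffeomorphism $\exp(\Lie H^-)\times\exp(\Lie H_0)\times\exp(\Lie H)\to G$ lets you write $\exp(\Ad(h)X)=n^-(h)\,m(h)\,n^+(h)$ with each factor $O(\delta)$; conjugating this product by $a_{\log R}$ then simply contracts $n^-(h)$, fixes $m(h)$, and expands $n^+(h)$, with no further commutation needed. Equivalently, your after-conjugation approach does work, but only because $\Ad(a_{\log R})$ is a Lie-algebra automorphism respecting the weight grading, so every BCH term of $\Ad(a_{\log R}h)X$ is $\Ad(a_{\log R})$ of the corresponding term for $\Ad(h)X$; the terms landing in $\Lie(H_0)\oplus\Lie(H^-)$ therefore carry a factor $R^{\mu}$ with $\mu\le 0$ and stay $O(\delta)$ — this is the justification for "each step is $O(\delta)$ uniformly in $R$" that your write-up leaves implicit. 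Second, the phrase "legal by bi-invariance of Haar" is slightly off: the substitution $h\mapsto\exp(-W_+(h))h$ is not a fixed left-translation (the shift depends on $h$), so bi-invariance alone does not settle the Jacobian; what you actually need, and what you then correctly invoke, is that $W_+(\cdot)$ is a smooth $\Lie(H)$-valued map of size $O(\delta)$, hence the Jacobian is $1+O(\delta)$ and the symmetric difference of $B_1^H$ with its image has measure $O(\delta)$. With these clarifications the argument is correct and complete.
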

This proposition follows from the horospherical action and our explicit choice of Folner sequence.

Combining those two propositions together, we get the following quantitative estimate.
\begin{prop}
    Assume that the following mass condition holds
    \begin{equation}\label{eq:mass-cond}
        m(B_{\delta}(a_{-\log R}.x)) > R^{2\gamma-2s},
    \end{equation} then 
    \begin{equation}\label{eq:semi-eff}
        \lvert A_{R}f(x) - \int_{G/\Gamma}fdm \rvert \ll_{f} \delta + R^{-\gamma}.
    \end{equation}
\end{prop}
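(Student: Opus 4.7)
The plan is to combine the effective mean ergodic estimate with the Lipschitz-in-initial-point estimate via a pigeonhole argument: the mass condition~\eqref{eq:mass-cond} is calibrated precisely so that the horospherical ball around $a_{-\log R}.x$ has mass exceeding the measure of the ``bad set'' coming from the mean ergodic theorem, forcing the existence of a nearby \emph{good} point from which the estimate transfers back to $x$.

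Concretely, I would first apply the preceding effective mean ergodic proposition with $\varepsilon = R^{-\gamma}$ to the bad set
\begin{equation*}
    E := \left\{ y\in G/\Gamma : \left\lvert A_{R}f(y) - \int_{G/\Gamma} f\,dm\right\rvert > R^{-\gamma} \right\},
\end{equation*}
obtaining $m(E) \leq R^{2\gamma - 2s}$. Since $m$ is $G$-invariant, the pullback $\widetilde{E} := a_{-\log R}.E$ has the same measure. The mass condition~\eqref{eq:mass-cond} then reads $m(B_{\delta}(a_{-\log R}.x)) > R^{2\gamma - 2s} \geq m(\widetilde{E})$, and hence $B_{\delta}(a_{-\log R}.x)$ cannot be contained in $\widetilde{E}$. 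Picking any $w \in B_{\delta}(a_{-\log R}.x) \setminus \widetilde{E}$ and setting $y := a_{\log R}.w$, we have $y \notin E$, so $\lvert A_{R}f(y) - \int f\,dm \rvert \leq R^{-\gamma}$, while $\dist(a_{-\log R}.x,\, a_{-\log R}.y) = \dist(a_{-\log R}.x, w) < \delta$.

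To conclude, I would invoke the Lipschitz proposition, which yields $\lvert A_{R}f(x) - A_{R}f(y) \rvert \ll_{f} \delta$, and close by the triangle inequality
\begin{equation*}
    \lvert A_{R}f(x) - \int f\,dm \rvert \leq \lvert A_{R}f(x) - A_{R}f(y) \rvert + \lvert A_{R}f(y) - \int f\,dm \rvert \ll_{f} \delta + R^{-\gamma}.
\end{equation*}
The entire substance of the statement is already loaded into the two input propositions, and no serious obstacle appears at this stage: the exponent matching $\varepsilon^{-2} R^{-2s} = R^{2\gamma - 2s}$ at $\varepsilon = R^{-\gamma}$ is automatic. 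The real content lies upstream, where the spectral gap feeds the mean ergodic estimate and the horospherical renormalization feeds the Lipschitz bound.
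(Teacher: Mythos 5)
Your argument is correct and is exactly what the paper means by ``combining those two propositions together'': the mean ergodic estimate with $\varepsilon = R^{-\gamma}$ bounds the bad set by $R^{2\gamma-2s}$, the mass condition forces a good point $w$ into the $\delta$-ball around $a_{-\log R}.x$, and the Lipschitz proposition plus the triangle inequality finish. No gap, and no genuine departure from the paper's (implicit) route.
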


In order to conclude the proof as in~\cite{katz}, one needs to verify the mass condition~\eqref{eq:mass-cond} and later set $\delta=R^{-\eta}$ and optimize the resulting~\eqref{eq:semi-eff}.

The main caveat is that the mass condition~\eqref{eq:mass-cond} \emph{does not need to hold}, indeed even in the case of $X=SL_{2}(\mathbb{R})/SL_{2}(\mathbb{Z})$ one may exhibit $H$-generic points $x$ (namely $\overline{H.x}=X$), but the ``height'' of the point $a_{-\log R}.x$ (which here can be considered as the height of the representative in the upper half plane model with respect to the usual Dirichlet fundamental domain) is high , even as high as $O(R^{1/2-\epsilon'})$ for some $\epsilon'$, as a result, the injectivity radius around $a_{-\log R}.x$ is of magnitude $O(R^{-1/2+\epsilon'})$ and so in the mass condition~\eqref{eq:mass-cond} we get
\begin{equation*}
    R^{-1+2\epsilon'}>R^{2\gamma-2s}.
\end{equation*}
As $s\leq 1/2$ and $\epsilon'$ is arbitrarily small, one cannot guarantee the existence of such conditions.
Such points are of ``Liouvillian'' nature as their dynamics is well-approximated by an ensemble of $H$-periods with relatively small period (compared to $R$).

In the previous approaches toward equidistribution, one used the uniform non-divergence theorem of Dani and Margulis, combined with some polynomial recurrence condition in order to force the $H$-orbit of the point $a_{-\log R}.x$ to a compact set where one may control the injectivity radius.
As unipotent trajectories diverge polynomially, once a trajectory intersects a compact set, it stays near this compact set for most of the time, as can be deduced (in a quantitative form) from Kleinbock-Margulis~\cite{kleinbock-margulis}.

Here we will use a different approach.
We may write the unipotent average as
\begin{equation}\label{eq:uni-avg-alt}
   B_{R}(f)(x) = \int_{B_{1}^{H}}f(a_{\log R}.h.a_{-\log R}.x)dh.
\end{equation}
We will say that $x$ is diophantine if the ``height'' of $a_{-\log R}.x$ grows slowly, in particular we want the height to grow in a speed which is lesser than the speed defined by all the various tubes containing $H$-periods, see Definition~\ref{def:dioph} for precise statement.
We are then going to use the proof of Lemma~\ref{lem:main-lem} in order to show that there exists some $\tau=~\tau(\varepsilon,\alpha)<1$ for which
\begin{equation*}
    a_{\tau\log R}.B_{1}^{H}.a_{-\log R}.x \cap X_{\geq\varepsilon} \neq\emptyset.
\end{equation*}
Using this result, we may modify the average~\eqref{eq:uni-avg-alt} as follows
\begin{equation}
\begin{split}
    B_{R}(f)(x) &= \int_{B_{1}^{H}}f(a_{\log R}.h.a_{-\log R}.x)dh \\
    &= \int_{B_{1}^{H}}f(a_{(1-\tau)\log R}.a_{\tau\log R}.h.a_{-\log R}.x)dh \\
    &= \int_{h\in B_{1}^{H}} \int_{u\in B_{1}^{H}}f(a_{(1-\tau)\log R}.u.(a_{\tau\log R}.h.a_{-\log R}.x))dudh \\
    &\ \  + O_{f,\tau,H}\left(R^{-\tau\cdot d_{H}}\right).
\end{split}
\end{equation}
where $d_{H}$ is some constant depends on $H$, the error was introduced because of the extra averaging (convolution) by $u$ in the appropriate scale.

Now by the result we have about the divergence, for most of the $h\in B_{1}^{H}$, $a_{\tau\log R}.h.a_{-\log R}.x \in X_{\geq \varepsilon}$.
For such $h$'s, we have that the inner integral
$\int_{u\in B_{1}^{H}}f(a_{(1-\tau)\log R}.u.(a_{\tau\log R}.h.a_{-\log R}.x))du$ converges to $\int fdm$ quantitatively and we may bound the rest of the integrand trivially by $O_{f}(m(\left\{h\in B_{1}^{H} \mid a_{\tau\log R}.h.a_{-\log R}.x \notin X_{\geq \varepsilon}  \right\}))$.

The estimate for the measure of the bad set follows from the Margulis' inequality by using Markov's inequality as follows:
\begin{equation}
\begin{split}
 m\left\{\underline{h}\leq 1 \mid \alpha_{i}(a_{\log r}.\exp(\underline{h}).a_{-\log R}y)< \varepsilon^{1/\delta}  \right\} &= m\left\{\underline{h}\leq 1 \mid \alpha_{i}(a_{\log r}.\exp(\underline{h}).y)^{-\delta}>\varepsilon  \right\} \\
 &\leq \frac{A_{r}\alpha_{i}^{-\delta}(y)}{\varepsilon} \\
 &\leq \frac{C_{r,\delta}\cdot\alpha_{i}^{-\delta}(y)}{\varepsilon},
 \end{split}
 \end{equation}
 writing $y=a_{-\log R}.x$ and using the diophantine assumptions over the divergence of $x$ under the $a_{-\log R}$-flow we deduce the required estimate.

\subsection{Diophantine properties of lattices defined over number fields}
The corollary for equidistribution of algebraic lattices will follow at once, once we prove that for algebraic lattices $x$, the diophantine exponents $\alpha_{i}(x)$ is (effectively) strictly less than $1$ for any $i$.

We begin this by showing the following example for the $SL_{2}(\mathbb{R})$ case.
\begin{ex}
Consider $a_{-\log T}$ as needed.
For any $x$ which is in $P$ - the upper triangular subgroup (which is a $\mathbb{Q}$-parabolic subgroup containing $H$) we have the following ``rate'' of escape to the cusp - by considering $v=(1,0)$ we have that
\begin{equation}
    \limsup_{T\to \infty}\frac{\log \lVert \rho(a_{-\log T}).v\rVert}{0.5\log T} = 1.
\end{equation}
Now assume that $x$ is an algebraic matrix in $SL_{2}(\mathbb{R})$ which is not contained in $P$ then we want to show that the shortest non-zero vector in $a_{-\log T}.x.\mathbb{Z}^{2}$ is longer than $O(T^{-1/2})$.

Pick some $M,N\in \mathbb{Z}$ such that $a_{-\log T}.x.(M,N)$ is the shortest non-zero vector.
We have that
\begin{equation*}
    a_{-\log T}.x.(M,N)=(T^{-1/2}\cdot(aM+bN),T^{1/2}\cdot(cM+dN)).
\end{equation*}
As $x\notin P$ we have that $c\neq 0$.
If $\lvert cM+dN \rvert \gg T^{-1+\delta}$ for some small $\delta>~0$, then we are done.
In the other case we have that $T^{-1+\delta}\gg \lvert cM+dN \rvert$ and moreover $\lvert cM+dN \rvert \gg M^{-1-\epsilon}$ for any $\epsilon$ where this inequality is non-effective due to \emph{Roth's theorem}.
So we have that $M\gg T^{(1-\delta)/(1+\epsilon)}$.
Furthermore, as $\lvert cM+dN \rvert \approx 0$, we have that $\lvert aM+bN \rvert \gg M$ so $\lvert T^{-1/2}\cdot (aM+bN) \rvert \gg T^{-1/2}\cdot M \gg T^{-1/2+(1-\delta)/(1+\epsilon)}$.
So one may show (ineffectively) that the diophantine rate of $x$ is less than $\delta$ for any positive $\delta$.

Now we may make the above argument effective in the following manner.
Using Liouville's theorem, we may replace the ineffective bound derived from Roth's theorem in an effective bound (depending on the field of definition of $x$), namely
$\lvert cM+dN \rvert \gg M^{-(d-1)}$ where $[\mathbb{Q}(g):\mathbb{Q}]\leq d$, where this inequality is \emph{effective}.
As a result, we have that
$M\gg T^{(1-\delta)/(d-1)}$.
If so we get that $\lvert T^{-1/2}(aM+bN)\rvert \gg T^{-1/2}\cdot T^{(1-\delta)/(d-1)}$.
Optimizing both regions lead to considering $T^{1/2-(1-\delta)}$ and $T^{-1/2+(1-\delta)/(d-1)}$, which gives $\delta=\frac{1}{d}$, hence the diophantine exponents is estimated effectively as
\begin{equation*}
    \alpha_{1}(x) \leq \frac{-1/2+\delta}{-1/2} = 1-\frac{\delta}{2}.
\end{equation*}
\end{ex}

Now we are going to generalize this notion of being diophantine from $SL_{2}$ to general algebraic groups defined over $\mathbb{Q}$. Using this notion, one may show that lattices defined over algebraic number fields (which are not $H$-invariant) will satisfy a diophantine condition effectively. Using the method discussed above, one deduce a fully quantified and effective horospherical equidistribution theorem for such class of lattices.

Consider the settings of Dani-Margulis~\cite{Dani1991}. Let $G$ be the real points of a semisimple algebraic group defined over $\mathbb{Q}$.
Let $r$ be the $\mathbb{Q}$-rank of $G$, which we assume to be at-least $1$.
Let $S$ be the real points of a maximal split $\mathbb{Q}$-torus in $G$.
We fix an ordering over th $\mathbb{Q}$-roots of $S$, and denote by $\left\{\alpha_{1},\ldots,\alpha_{r} \right\}$ the set of simple $\mathbb{Q}$-roots.
For $i=1,\ldots,r$, let $P_{i}$ be the standard maximal $\mathbb{Q}$-parabolic subgroup corresponding to the set of roots different from $\alpha_{i}$.
For each $i$, the root $\alpha_{i}$ which is a character over $S$, extends uniquely to a character over $P_{i}$, which we will denote also by $\alpha_{i}$.
For $1\leq i \leq r$ denote by $U_{i}=Rad^{u}(P_{i})$ be the unipotent radical of $P_{i}$.
For each $x\in P_{i}$, we get that $Ad(x)$ acts as a linear operator over $Lie(U_{i})$, with $\det (Ad(x)\mid_{Lie(U_{i})}) = \alpha_{i}^{m_{i}}(x)$ where $m_{i}=\sum n_{\lambda}\cdot\lambda_{i}$ where the sum is taken over all positive roots $\lambda$ where for each such $\lambda$ $n_{\lambda}$ is the dimension of the root space corresponding to $\lambda$ and $\lambda_{i}$  equals to the coefficients of $\alpha_{i}$ in the expansion of $\lambda$ in terms of $\alpha_{1},\ldots,\alpha_{r}$.

We fix a maximal compact subgroup $K$ of $G$ for which $S$ is invariant under the Cartan involution of $G$ associated to $K$, and we have the Cartan decomposition of $G$ as follows $G=K\cdot P_{i}$. 
We define the functions $d_{i}(g)=~\lvert\alpha_{i}(x)\rvert^{m_i}$, where $g=k\cdot x$, $k\in K,x\in P_{i}$, for $i=1,\ldots,r$.
We have the following definition of diophantine exponents, for $i=1,\ldots,r$
\begin{defn}\label{def:dioph}
\begin{equation}\label{eq:def-dioph}
    \alpha_{i}(g) = \limsup_{T\to \infty} \frac{\log( \min_{\gamma\in \Gamma,f\in F} d_{i}(a_{-\log T}.g.\gamma.f))}{\log(d_{i}(a_{-\log T}))}.
\end{equation}
\end{defn}
This definition agrees with the definition given in the introduction.

We say that $g$ satisfy a diophantine condition if
$\alpha_{i}(g)<1$ \emph{effectively}, for all $i=1,\ldots,r$.
\begin{thm}
Assume $g$ is defined over some number field, and $g\notin~P$ for any $\mathbb{Q}$-parabolic subgroup $P$, then $g$ is diophantine.
\end{thm}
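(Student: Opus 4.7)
First, for each simple $\mathbb{Q}$-root $\alpha_{i}$, I would invoke the Chevalley realization of the flag variety $G/P_{i}$: choose a $\mathbb{Q}$-rational irreducible representation $(\rho_{i},V_{i})$ of $G$ with a highest-weight vector $v_{i}$ such that $P_{i}=\mathrm{Stab}_{G}([v_{i}])$. After normalizing $v_{i}$ to be a primitive integer vector, the $\Gamma$-orbit $\rho_{i}(\Gamma)v_{i}$ lies in $V_{i}(\mathbb{Z})$, and the definition of $d_{i}$ gives
\begin{equation*}
d_{i}(a_{-\log T}\, g\gamma) \asymp \lVert \rho_{i}(a_{-\log T}\, g)\cdot w\rVert^{m_{i}}, \qquad w := \rho_{i}(\gamma)v_{i}.
\end{equation*}
Decomposing $V_{i} = \bigoplus_{\lambda}V_{i,\lambda}$ into $S$-weight spaces, write $\rho_{i}(g)w = \sum_{\lambda}u_{\lambda}(w)$ with $u_{\lambda}(w)\in V_{i,\lambda}$; in a fixed $\mathbb{Q}$-rational basis, each coordinate of $u_{\lambda}(w)$ is a linear form in the integer coordinates of $w$ whose coefficients lie in the number field $K_{0}$ over which $g$ is defined.

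The hypothesis that $g\Gamma$ avoids every proper $\mathbb{Q}$-parabolic translates to $\rho_{i}(g\gamma)v_{i}$ never being a scalar multiple of $v_{i}$, so for each $w\in\rho_{i}(\Gamma)v_{i}$ there is some weight $\lambda<\lambda_{\max}$ with $u_{\lambda}(w)\neq 0$. I would then invoke the effective Liouville estimate: any nonzero linear form $L(w)=\sum c_{k}w_{k}$ with $c_{k}\in K_{0}$ of degree $d=[K_{0}:\mathbb{Q}]$ satisfies $\lvert L(w)\rvert \gg \lVert w\rVert_{\infty}^{-(d-1)}$ on nonzero integer vectors, with implicit constant depending explicitly on the heights of the $c_{k}$.

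The conclusion then follows by the two-case balancing used in the $SL_{2}$ example preceding the theorem. For $\lVert w\rVert\leq T^{\eta}$ the Liouville bound yields $\lVert u_{\lambda}(w)\rVert\gg T^{-\eta(d-1)}$, which after multiplication by the scalar $T^{-\lambda(a)}$ (larger than $T^{-\lambda_{\max}(a)}$, since $\lambda<\lambda_{\max}$) gives a lower bound on $\lVert \rho_{i}(a_{-\log T}\,g)w\rVert$. For $\lVert w\rVert>T^{\eta}$, since $\rho_{i}(g)$ is a fixed invertible operator one has $\lVert \rho_{i}(g)w\rVert\asymp \lVert w\rVert>T^{\eta}$, so some component $u_{\lambda}(w)$ satisfies $\lVert u_{\lambda}(w)\rVert\gtrsim T^{\eta}$, and its contribution after the $a_{-\log T}$-scaling again dominates. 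Optimizing $\eta$ produces an explicit $\epsilon=\epsilon(\rho_{i},d)>0$ with
\begin{equation*}
\min_{\gamma}d_{i}(a_{-\log T}\,g\gamma) \gg T^{-(1-\epsilon)\chi_{i}},\qquad \chi_{i}:=-\frac{\log d_{i}(a_{-\log T})}{\log T},
\end{equation*}
which proves $\alpha_{i}(g)\leq 1-\epsilon$ effectively for each $i$.

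The main obstacle lies in the uniformity step: the distinguished weight $\lambda(w)<\lambda_{\max}$ at which $u_{\lambda}(w)$ is nonzero may itself depend on $w$, and in a general representation the weight poset is much richer than the two-weight picture of the standard $SL_{2}$ representation. One must verify that no subtle cancellation among the lower-weight components can simultaneously drive all of them below the scale at which Liouville is usable, and that the Liouville constants can be taken uniform over $w$ (for instance by replacing heights of individual coefficients by operator norms of the fixed maps $w\mapsto u_{\lambda}(w)$ attached to the representation $\rho_{i}$).
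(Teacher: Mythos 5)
Your proposal follows essentially the same strategy as the paper: realize each maximal $\mathbb{Q}$-parabolic $P_i$ as the stabilizer of a line in a suitable $\mathbb{Q}$-rational representation, decompose $\rho_i(g\gamma)v_i$ into $S$-weight components, observe that avoiding rational parabolics forces some lower-weight component to be nonzero, apply the effective Liouville bound to that nonzero linear form in the integer vector $w$, and then balance the two cases $\lVert w\rVert\lessgtr T^{\eta}$. The paper implements the same idea with $v_H\in\bigwedge^{\dim U_i}\mathrm{Lie}(G)$ (the Pl\"ucker realization via the adjoint representation) in place of your Chevalley fundamental representation; these are interchangeable constructions of the same projective embedding $G/P_i\hookrightarrow \mathbb{P}(V_i)$, so this is a cosmetic difference.

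Regarding the obstacle you flag at the end: it is not an actual gap. For each weight $\lambda$ the map $w\mapsto u_\lambda(w)$ is a \emph{fixed} $K_0$-linear projection determined by $g$ and $\rho_i$, independent of $w$, and there are only finitely many weights. Liouville's estimate therefore applies with a single implied constant (the worst one over the finitely many $\lambda$), depending only on $g$, $[K_0:\mathbb{Q}]$, and $\rho_i$. The ``subtle cancellation'' you worry about cannot occur because you do not sum the lower-weight contributions: you fix the single $\lambda<\lambda_{\max}$ where $u_\lambda(w)\neq 0$ (guaranteed by the parabolic-avoidance hypothesis) and bound that one component alone, then pass to the $\lVert w\rVert>T^\eta$ regime where invertibility of $\rho_i(g)$ takes over. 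In fact your invertibility step is cleaner than the paper's corresponding passage, where the claim $\rho_{v_H}(x\gamma f v_H)\gg_x\mathrm{height}(\gamma)$ is stated for a linear form without explaining why it cannot be small; your version, deducing largeness of the top-weight component from $\lVert\rho_i(g)w\rVert\asymp\lVert w\rVert$ together with smallness of the remaining components, makes that step precise. One small point worth noting: your restatement of the hypothesis as ``$g\Gamma$ avoids every proper $\mathbb{Q}$-parabolic'' is the correct form; the paper's literal phrasing ``$g\notin P$'' is weaker than what the proof actually uses, since one needs $g\gamma\notin P_i$ for every $\gamma\in\Gamma$ and every $i$.
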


\begin{proof}
Consider $\bigwedge^{i} Lie(G)$.
We fix a primitive basis for $Lie(H)$ and for that we associate with $H$ a vector $v_{H}\in \bigwedge^{i} Lie(G)$ which is fixed under the $H$, and moreover, $a_{-\log T}.v_{H}=d_{i}(a_{-\log T}).v_{H}$.
Now consider the vector $x.\gamma.f.v_{H}$, we may write
$x.\gamma.f.v_{H}=\rho_{v_{H}}(x.\gamma.f.v_{H})\cdot \hat{v_{H}} + \sum_{j}\rho_{j}(x.\gamma.f.v_{H}).v_{j}$, where $v_{j}$ are formed from a rational basis to $\bigwedge^{i} Lie(G)$.
As $x$ is not contained in any proper parabolic $P$, we must have that some of the projections $\rho_{j}(x.\gamma.f.v_{H})$ to the other $v_{j}$ which are not contracted under the $a_{-\log T}$-action (by definition of $H$ as being the contracted set) are non-zero.

Now assume that for all of those non-contracting directions $v_{j}$ we have that $\rvert a_{-\log T}.\rho_{j}(x.\gamma.f.v_{H})\rvert \leq d_{i}(a_{-\log T})^{1-\epsilon}.$
Write $$a_{-\log T}.v_{j}=d_{i}(a_{-\log T})^{\beta_{j}},$$ for some $b_{j}< 1$.
We must have that
$$ \lvert \rho_{j}(x.\gamma.f.v_{H})\rvert \leq d_{i}(a_{-\log T})^{1-\epsilon-\beta_{j}}. $$
Note that as $x$ is algebraic, and we have that for a fixed $x,f,v_{H}$, $\rho_{j}(x.\gamma.f.v_{H})$ is a linear form defined over some number field, we must have that
$$ \rho_{j}(x.\gamma.f.v_{H}) \gg_{x} \text{height}(f)^{-D(x)} ,$$
by Liouville's theorem, where we define the height of an integral vector as its height inherited from the arithmetic structure (as $G$ is embedded beforehand algebraically in some $GL_{N}$ it inherits the height function from $GL_{N}$).

Hence $\text{height}(\gamma)\gg_{x} d_{i}(a_{-\log T})^{-(1-\epsilon-\beta_{j})/D}$.
As $\rho_{v_{H}}(x.\gamma.f.v_{H})$ is a linear form, we must have that $$\rho_{v_{H}}(x.\gamma.f.v_{H}) \gg_{x} \text{height}(\gamma) \gg_{x} d_{i}(a_{-\log T})^{-(1-\epsilon-\beta_{j})/D}. $$
As a result
$$\lvert a_{-\log T}.\rho_{v_{H}}(x.\gamma.f.v_{H}).v_{H}\rvert \gg _{x} d_{i}(a_{-\log T})^{1-(1-\epsilon-\beta_{j})/D}.$$

As we have that $\beta_{j}<1$, for $\epsilon>0$ small enough, we get a non-trivial diophantine condition.
\end{proof}

\begin{rem}
Using the subspace theorem, one may show that \emph{non-effectively}, one may replace $D(x)$ with a number arbitrarily close to $1$, leading to show a sublinear divergence rate. Hence one may conclude that \emph{non-effectively}, in the case of lattices defined over number fields, the equidistribution rate under horospherical flows is the same as the rate achieved in~\cite{katz} for compact quotients.
\end{rem}

\bibliographystyle{plain}
\bibliography{margulis-ineq}

\end{document}